\newcommand{\tr}{^{\sf T}}
\newcommand{\m}[1]{{\bf{#1}}}
\newcommand{\g}[1]{\mbox{\boldmath $#1$}}
\newcommand{\C}[1]{{\cal {#1}}}
\renewcommand{\bar}{\overline}
\newenvironment{nscenter}
 {\parskip=0pt\par\nopagebreak\centering}
 {\par\noindent\ignorespacesafterend}
\newtheorem{remark}{Remark}[section]
\title{
The Switch Point Algorithm
\thanks{January 26, 2021, revised April 28, 2021.
This research has been partially supported by the National
Science Foundation research grants 1819002 and 2031213.}
}
\author{
    Mahya Aghaee\thanks{{\tt mahyaaghaee@ufl.edu},
        https://people.clas.ufl.edu/mahyaaghaee/,
        PO Box 118105,
        Department of Mathematics,
        University of Florida, Gainesville, FL 32611-8105.}
\and
    William W. Hager\thanks{{\tt hager@ufl.edu},
        http://people.clas.ufl.edu/hager/,
        PO Box 118105,
        Department of Mathematics,
        University of Florida, Gainesville, FL 32611-8105.}
}
\begin{document}

\maketitle

\begin{abstract}
The Switch Point Algorithm is a new approach for solving optimal
control problems whose solutions are either singular or bang-bang
or both singular and bang-bang,
and which possess a finite number of jump discontinuities in an
optimal control at the points in time where the solution structure changes.
Problems in this class can often be reduced to an
optimization over the switching points.
Formulas are derived for the derivative of the objective with respect to the
switch points, the initial costate, and the terminal time.
All these derivatives can be computed simultaneously in just one integration
of the state and costate dynamics.
Hence, gradient-based unconstrained optimization techniques, including
the conjugate gradient method or quasi-Newton methods,
can be used to compute an optimal control.
The performance of the algorithm is illustrated using test problems
with known solutions and comparisons with other algorithms from the literature.
\end{abstract}

\begin{keywords}
Switch Point Algorithm, Singular Control, Bang-Bang Control,
Total Variation Regularization
\end{keywords}

\begin{AMS}
49M25, 49M37, 65K05, 90C30
\end{AMS}

\pagestyle{myheadings}
\thispagestyle{plain}
\markboth{M. AGHAEE and W. W. HAGER}{SWITCH POINT ALGORITHM}

\section{Introduction}
%
%
Let us consider a fixed terminal time control problem of the form
\begin{equation}\label{CP}
\min C(\m{x}(T)) \quad \mbox{subject to} \quad \m{\dot{x}}(t) =
\m{f}(\m{x}(t), \m{u}(t)), \quad \m{x}(0) = \m{x}_0, \quad
\m{u}(t) \in \C{U}(t),
\end{equation}
where $\m{x} : [0, T] \rightarrow \mathbb{R}^n$ is absolutely continuous,
$\m{u} : [0, T] \rightarrow \mathbb{R}^m$ is essentially bounded,
$C : \mathbb{R}^n \rightarrow \mathbb{R}$,
$\m{f} : \mathbb{R}^n \times \mathbb{R}^m \rightarrow$ $\mathbb{R}^n$, and
$\C{U}(t)$ is a closed and bounded set for each $t \in [0, T]$.
The dynamics $\m{f}$ and the objective $C$ are assumed to be differentiable.
The costate equation associated with (\ref{CP}) is the linear
differential equation
\begin{equation}\label{p}
\m{\dot{p}}(t) =
-\m{p}(t)\nabla_x \m{f}(\m{x}(t), \m{u}(t)),
\quad \m{p}(T) = \nabla C(\m{x}(T)) ,
\end{equation}
where $\m{p} : [0, T] \rightarrow \mathbb{R}^n$ is a row vector,
the objective gradient $\nabla C$ is a row vector, and
$\nabla_x \m{f}$ denotes the Jacobian of the dynamics with respect to $\m{x}$.
At the end of the Introduction, the notation and terminology are summarized.
Under the assumptions of the Pontryagin minimum principle,
a local minimizer of (\ref{CP}) and the associated costate have the
property that
\[
H (\m{x}(t), \m{u}(t), \m{p}(t)) =
\inf \{ H (\m{x}(t), \m{v}, \m{p}(t)) : \m{v} \in \C{U}(t) \}
\]
for almost every $t \in [0,T]$, where
$H(\m{x}, \m{u}, \m{p}) = \m{p}\m{f}(\m{x}, \m{u})$ is the Hamiltonian.

The Switch Point Algorithm is well suited for problems where an
optimal control is piecewise smooth with one or more jump discontinuities
at a finite set of times $0 < s_1 < s_2 < \ldots < s_{k} < T$ where there
is a fundamental change in the solution structure.
We also define $s_0 = 0$ and $s_{k+1} = T$.
For illustration, suppose that $\C{U}(t) =$
$\{ \m{v}\in\mathbb{R}^m : \g{\alpha}(t) \le \m{v} \le \g{\beta}(t) \}$,
where $\g{\alpha}$ and $\g{\beta} : [0,T] \rightarrow \mathbb{R}^m$,
and $\m{f}(\m{x}, \m{u}) = \m{g}(\m{x}) + \m{B}(\m{x})\m{u}$ with
$\m{B} : \mathbb{R}^n \rightarrow \mathbb{R}^{n \times m}$ and
$\m{g}:\mathbb{R}^n \rightarrow \mathbb{R}^n$.
The switching function, the coefficient of $\m{u}$ in the Hamiltonian,
is given by $\C{S}(t) = \m{p}(t) \m{B}(\m{x}(t))$.
Under the assumptions of the
Pontryagin minimum principle, an optimal solution
of (\ref{CP}) has the property that
\[
\begin{array}{lcl}
u_i (t) &=& \alpha_i (t) \mbox{ if } \C{S}_i(t) > 0, \\
u_i (t) &=& \beta_i (t) \mbox{ if } \C{S}_i(t) < 0, \mbox{ and} \\
u_i(t) &\in& [\alpha_i (t), \beta_i (t)] \mbox{ otherwise,}
\end{array}
\]
for each index $i \in [1, m]$ and for almost every $t \in [0, T]$.
On intervals where $\C{S}_i$ is either strictly positive or
strictly negative, $u_i$ is uniquely determined from the
minimum principle, and the control is said to be bang-bang.
If for some $i$, $\C{S}_i$ vanishes on an interval $[\sigma, \tau]$,
then the problem is said to be singular, and on this singular interval,
the first-order optimality conditions provide no information
concerning $u_i$ except that it satisfies the control constraints.

On any singular interval $(s_j, s_{j+1})$, not only does a component $\C{S}_i$
of the switching function vanish, but also \cite{Schattler12} the derivatives of
$\C{S}_i$ vanish, assuming they exist.
If the singularity has finite order, then
after equating derivatives of the switching function to zero,
we eventually obtain a relationship of the form
$u_i(t) = \phi_{ij} (\m{x}(t), \m{p}(t), t)$ for all $t \in (s_j, s_{j+1})$.
In vector notation, this relation can be expressed
$\m{u}(t) = \g{\phi}_{j} (\m{x}(t), \m{p}(t), t)$.
In many cases, it is possible to further simply this to
$\m{u}(t) = \g{\phi}_{j} (\m{x}(t), t)$, where there is no dependence of
the control on $\m{p}$.

In the Switch Point Algorithm (SPA), two separate cases are considered:
\smallskip
\begin{itemize}
\item[Case 1.]
For every $j$, $\g{\phi}_{j}$ is independent of $\m{p}$.
\item[Case 2.]
For some $j$, $\g{\phi}_{j}$ depends on $\m{p}$.
\end{itemize}
\smallskip
In a nutshell, the Switch Point Algorithm is based on the following
observations:
In Case~1, the control has the feedback form
$\m{u}(t) = \g{\phi}_j(\m{x}(t), t)$ for $t \in (s_j, s_{j+1})$,
$0 \le j \le k$.
For any given choice of the switching points, the solution of
the state dynamics (assuming a solution exists) yields a
value for the objective $C(\m{x}(T))$.
Hence, we can also think of the objective as a function
$C(\m{s})$ depending on $\m{s}$.
In Case~2, where the control also depends on $\m{p}$, we could
(assuming a solution exists),
integrate forward in time the coupled state and costate dynamics
from any given initial condition $\m{p}(0) = \m{p}_0$
with the control given by
$\m{u}(t) = \g{\phi}_j(\m{x}(t), \m{p}(t), t)$ on $(s_j, s_{j+1})$,
$0 \le j \le k$,
to obtain a value for the objective.
The objective would then be denoted $C(\m{s}, \m{p}_0)$
since $\m{x}(T)$ depends on both $\m{s}$ and $\m{p}_0$.

Suppose that $\m{s} = \m{s}^*$ corresponds to the switching points
for a solution of (\ref{CP}) and $\m{p}(0) = \m{p}_0^*$ is the
associated initial costate.
In many applications, one finds that
$\m{u}(t) = \g{\phi}_j(\m{x}(t), t)$ or
$\m{u}(t) = \g{\phi}_j(\m{x}(t), \m{p}(t), t)$ remain feasible in (\ref{CP})
for $\m{s}$ in a neighborhood of $\m{s}^*$ and for $\m{p}(0)$
in a neighborhood of $\m{p}_0^*$.
Moreover, $C(\m{s})$ or $C(\m{s}, \m{p}_0)$ achieves a local minimum at
$\m{s}^*$ or $(\m{s}^*, \m{p}_0^*)$.
Therefore, at least locally, we could replace (\ref{CP}) by the
problem of minimizing the objective over $\m{s}$ and $\m{p}_0$.

We briefly review previous numerical approaches for singular control problems.
Since the literature in the area is huge, our goal is to mostly highlight
historical trends.
One of the first approaches for singular control problems was
what Jacobson, Gershwin, and Lele \cite{Jacobson70}
called the $\epsilon$~-~$\alpha(\cdot)$ algorithm,
although today it would be called the proximal point method.
The idea was to make a singular problem nonsingular by adding a
strongly convex quadratic term to the objective.
The new objective is
\[
C(\m{x}(T)) + \frac{\epsilon_k}{2} \int_0^T \|\m{u}(t) - \m{u}_k (t)\|^2 ~ dt,
\]
and the solution of the problem with the modified objective yields
$\m{u}_{k+1}$.
Jacobson denoted the approximating control sequence by $\alpha_k$,
so the scheme was referred to as the $\epsilon$~-~$\alpha(\cdot)$ algorithm.
The choice of $\epsilon_k$ is a delicate issue; if it is too small,
then $\m{u}_{k+1}$ can oscillate wildly, but
if it is chosen just right, good approximations to solutions of
singular control problems have been obtained.

In a different approach, Anderson \cite{Anderson1972} considers a problem
where the control starts out nonsingular, and then changes to singular
at switch time $s_1$.
It is proposed to make a guess for the initial costate $\m{p}(0)$ and
then adjust it until the junction conditions at the boundary between
the singular and nonsingular control are satisfied.
Then further adjustments to the initial costate are made to satisfy the
terminal conditions on the costate.
If these further adjustments cause the junction conditions to be
violated unacceptably, then the entire process would be repeated,
first modifying the initial costate to update the switching point and
to satisfy the junction conditions, and
then to further modify the initial costate to satisfy the terminal
conditions.
Aly \cite{Aly1978} also considers the method of Anderson,
but with more details.
Maurer \cite{Maurer1976} uses multiple shooting techniques to
satisfy junction conditions and boundary conditions in singular
control problems.
In general, choosing switching points and adjusting the initial
costate condition to satisfy the junction and terminal conditions
could be challenging.

Papers closer in spirit to the Switch Point Algorithm include more recent
works of Maurer {\it et al.} \cite{Maurer05} and Vossen \cite{Vossen2010}.
In both cases, the authors consider general boundary conditions of
the form $\phi(\m{x}(0), \m{x}(T), T) = \m{0}$,
where $\phi : \mathbb{R}^{2n+1} \rightarrow \mathbb{R}^r$,
$0 \le r \le 2n$.
In \cite{Maurer05} the authors focus on bang-bang control problems, while
Vossen considers singular problems where the control
has the feedback form of Case~1.
In both papers, the authors view the objective as a function of the
switching points, and the optimization problem becomes a finite dimensional
minimization over the switching points and the initial state subject to the
boundary condition.
Vossen in his dissertation \cite{VossenThesis}
and in \cite[Prop~4.12]{Vossen_online} shows that when the switching
points correspond to the switching points of a control for the continuous
problem which satisfies the minimum principle, then the first-order optimality
conditions are satisfied for the finite dimensional optimization problem.
This provides a rigorous justification for the strategy of replacing
the original infinite dimensional control problem by a finite dimensional
optimization over the switching points and the boundary values of the state.

A fundamental difference between our approach and the approaches in
\cite{Maurer05} and \cite{Vossen2010} is that in the earlier work,
the derivative of the objective is expressed in terms
of the partial derivative of each state variables with respect to
each switching point, where this matrix of partial derivatives is
obtained by a forward propagation using the system dynamics.
We circumvent the evaluation of the matrix of partial
derivatives by using the costate equation to directly compute the
partial derivative of the cost with respect to all the switching points;
there is one forward integration of the state equation and one backward
integration of the costate equation to obtain the partial derivatives
of the objective with respect to all the switching points at the same time.
In a sense, our approach is a generalization of
\cite[Thm. 2]{bang_bang} which considers purely bang-bang controls.
One benefit associated with
the computation of the matrix of partial derivatives of each state
with respect to each switch point is that with marginal additional work,
second-order optimality conditions can be checked.

When the control has the feedback form of Case~1 and the dynamics are
affine in the control, our formula for the objective derivative
reduces to the product between the switching function and the jump
in the control at the switching point.
Consequently, at optimality in the finite dimensional problem,
the switching function should vanish.
The vanishing of the switching function at the switching point
is a classic necessary optimality condition
\cite{Agrachev02, OsmolovskiiMaurer07, Pontryagin62}.
Our formula, however, applies to arbitrary, not necessarily
optimal controls, and hence it can be combined with a nonlinear programming
algorithm to solve the control problem (\ref{CP}).

The methods discussed so far are known as indirect methods;
they are indirect in the sense that steps of the algorithm
employ information gleaned from the first-order optimality conditions.
In a direct method, the original continuous-in-time problem is
discretized using finite elements, finite differences, or collocation
to obtain a finite dimensional problem which is solved by a nonlinear
optimization code.
These discrete problems can be difficult due to both ill conditioning
and to discontinuities in the optimal control at the switching points.
If the location of the switching points were known, then high order
approximations are possible using orthogonal collocation techniques
\cite{HagerHouRao19}.
But in general, the switching points are not known, and mesh refinement
techniques
\cite{DarbyHagerRao11, DarbyHagerRao10, LiuHagerRao15, LiuHagerRao18,
PattersonHagerRao15} can lead to highly refined meshes
in a neighborhood of discontinuities, which can lead to a large dimension
for the discrete problem in order to achieve a specified error tolerance.

Betts briefly touched on a hybrid direct/indirect approach to singular
control in \cite[Sect.~4.14.1]{Betts2010} where the switching points between
singular and nonsingular regions are introduced as variables in the
discrete problem, a junction condition is imposed at the switching points,
and the form of the control in the singular region is explicitly imposed.
A relatively accurate solution of the Goddard rocket problem
\cite{Bryson75} was obtained.

In a series of papers,
see \cite{Biegler20, Biegler19b, Biegler16, Biegler19a} and
the reference therein, Biegler and colleagues develop approaches to
singular control problems that combine ideas from both direct and indirect
schemes.
In \cite{Biegler16}, 
the algorithm utilizes an inner problem where
the mesh and the approximations to the switching points are fixed, and the
discretized control problem is solved by a nonlinear optimization code
such as IPOPT \cite{Wachter06}.
Then an outer problem is formulated where the finite element mesh is
modified so as to reduce errors in the dynamics or make the switching
function and its derivative closer to zero in the singular region.
In \cite{Biegler19a} more sophisticated rules are developed for moving grid
points or either inserting or deleting grid points
by monitoring errors or checking for spikes.
In \cite{Biegler19b}, the inner and outer problems are also combined
to form a single nonlinear program that is optimized.
In \cite{Biegler20} the direct method is mostly used to obtain a starting
guess for the indirect phase of the algorithm.
In the indirect phase, special structure is imposed on the control to
reduce or eliminate wild oscillation, and conditions are imposed to
encourage the vanishing of the switching function in the singular region
and the constancy of the Hamiltonian.

In comparing the Switch Point Algorithm to the existing literature,
the necessary optimality conditions are not imposed except for our assumption,
in the current version of the algorithm, 
that the control in the singular region has been expressed as a
function of the state and/or costate.
Note that the papers of Biegler and colleagues do not make this assumption;
instead the vanishing of the switching function in the singular region
is a constraint in their algorithms.
SPA focuses on minimizing the objective with respect to the switching points;
presumably, the necessary optimality conditions will be satisfied at a
minimizer of the objective, but these conditions are not imposed
on the iterates.

After solving SPA's finite dimensional optimization problem
associated with a bang-bang or singular control problem, one may wish
to check the optimality of the computed finite dimensional solution
in the original continuous control problem (\ref{CP});
to check whether a switching point was missed
in the finite dimensional optimization problem, one could return to
the original continuous control problem, and test whether the
minimum principle holds at other points in the domain,
not just at the switching points, by performing an accurate integration
of the differential equations between the switching points.
To check the local optimality of the computed solution of the
finite dimensional problem, one could test the second-order sufficient
optimality conditions.
As noted in \cite[Thm.~3.1]{Maurer05} for bang-bang control,
satisfaction of the second-order sufficient optimality conditions in the
finite dimensional problem implies that 
the associated solution of the continuous control problem is a strict minimum.
Second-order sufficient optimality conditions for bang-singular controls 
are developed in \cite{Vossen2010}.

The paper is organized as follows:
In Section~\ref{dscase1}, we explain how to compute the derivative of
$C$ with respect to $\m{s}$ (Case~1), while Section~\ref{dscase2} deals with
the derivative of $C$ with respect to both $\m{s}$ and $\m{p}_0$
(Case~2).
Section~\ref{free} considers free terminal time problems, and obtains the
derivative of the objective with respect to the final time $T$.
The efficient application of SPA
requires a good guess for the structure of an optimal control.
In Section~\ref{starting} we explain one approach for generating a
starting guess using total variation regularization, which has been
effective in image reconstruction \cite{ROF} at replacing blurry edges
with sharp edges.
Finally, Section~\ref{examples} provides some comparisons with
other methods from the literature using test problems with known solutions.

{\bf Notation and Terminology.}
By a valid choice of the switch points,
we mean that $\m{s}$ satisfies the relations
$0 = s_0 < s_1 < s_2 < \ldots < s_k < s_{k+1} = T$.
Throughout the paper, $\| \cdot \|$ is any norm on $\mathbb{R}^n$.
The ball with center $\m{c} \in \mathbb{R}^n$ and radius $\rho$ is given by
$\C{B}_\rho (\m{c}) = \{ \m{x} \in \mathbb{R}^n : \|\m{x} - \m{c}\| \le \rho\}$.
The expression $\C{O}(\Delta s)$ denotes a quantity that is bounded
in absolute value by $c |\Delta s|$, where $c$ is a constant
that is independent of $\Delta s$.
Given $\m{x}$ and $\m{y} \in \mathbb{R}^n$, we let $[\m{x}, \m{y}]$ denote
the line segment connecting $\m{x}$ and $\m{y}$.
In other words,
\[
[\m{x}, \m{y}] = \{\m{x} + \alpha (\m{y}-\m{x}) : \alpha \in [0, 1]\}.
\]
The Jacobian of $\m{f}(\m{x}, \m{u})$ with respect to $\m{x}$ is denoted
$\nabla_x \m{f}(\m{x}, \m{u})$; its $(i, j)$ element is
$\partial f_i (\m{x}, \m{u})/ \partial x_j$.
For a real-valued function such as $C$, the gradient
$\nabla C(\m{x})$ is a row vector.
The costate and the generalized costate (introduced in Section~\ref{dscase2})
are both row vectors, while all other
vectors in the paper are column vectors.
The $L^2$ inner product on $[0, T]$ is denoted $\langle \cdot , \cdot \rangle$.
%
\section{Objective Derivative in Case~1}
\label{dscase1}
In Case~1, it is assumed that the control has the form
$\m{u}(t) = \g{\phi}_j(\m{x}(t), t)$ for all $t \in (s_j, s_{j+1})$,
$0 \le j \le k$.
With this substitution, the dynamics in (\ref{CP}) has the form
$\m{\dot{x}}(t) = \m{f}_j (\m{x}(t), t)$ on $(s_j, s_{j+1})$ where
$\m{f}_j (\m{x}, t) = \m{f}(\m{x}, \g{\phi}_j (\m{x}, t))$.
Note that $\m{f}_j$ is viewed as a mapping from
$\mathbb{R}^n \times [0, T]$ to $\mathbb{R}^n$.
The objective is $C(\m{x}(T))$ where $\m{x}$ is the solution to an
initial value problem of the form
\begin{equation}\label{IVP}
\m{\dot{x}}(t) = \m{F} (\m{x}(t), t), \quad
\m{F}(\m{x}, t) = \m{f}_j (\m{x}, t)
\mbox{ for } t \in (s_j, s_{j+1}), \quad
\m{x}(0) = \m{x}_0,
\end{equation}
$0 \le j \le k$.
In the Switch Point Algorithm, the goal is to minimize the objective
value over the choice of the switching points $s_1, s_2, \ldots , s_k$.
This minimization can be done more efficiently if the gradient of the
objective with respect to the switching points is known since superlinearly
convergent algorithms such as the conjugate gradient method or
a quasi-Newton method could be applied.
In Theorem~\ref{spa1}, a formula is derived for the gradient of $C$
with respect to $\m{s}$.
The following three preliminary results are used in the analysis.
\smallskip
\begin{lemma}\label{Lip}
If $\m{x} : [\sigma_1, \sigma_2] \rightarrow \mathbb{R}^n$ is Lipschitz
continuous, then so is $\|\m{x}(\cdot)\|$ and
\begin{equation}\label{deriv0}
\frac{d}{dt} \|\m{x}(t)\| \le \|\m{\dot{x}}(t)\|
\end{equation}
for almost every $t \in [\sigma_1, \sigma_2]$.
\end{lemma}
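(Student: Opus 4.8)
The plan is to reduce both assertions to the reverse triangle inequality $\big|\,\|\m{a}\| - \|\m{b}\|\,\big| \le \|\m{a} - \m{b}\|$, valid for any norm. First I would dispatch the Lipschitz continuity of the scalar function $t \mapsto \|\m{x}(t)\|$: if $L$ is a Lipschitz constant for $\m{x}$, then for all $s, t \in [\sigma_1, \sigma_2]$ the reverse triangle inequality gives
\[
\big|\,\|\m{x}(t)\| - \|\m{x}(s)\|\,\big| \le \|\m{x}(t) - \m{x}(s)\| \le L\,|t - s|,
\]
so $\|\m{x}(\cdot)\|$ is Lipschitz with the same constant $L$.

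For the derivative bound, I would next record that a Lipschitz real-valued function on an interval is absolutely continuous, hence differentiable almost everywhere. Applying this componentwise to $\m{x}$ shows that $\m{\dot{x}}(t)$ exists for almost every $t$, and applying it to the scalar Lipschitz function $\|\m{x}(\cdot)\|$ shows that its derivative exists for almost every $t$. Let $E \subseteq [\sigma_1, \sigma_2]$ be the full-measure set on which both derivatives exist; I would establish \eqref{deriv0} at each $t \in E$. Fixing such a $t$ and taking the forward difference quotient with $h > 0$, the reverse triangle inequality yields
\[
\frac{\|\m{x}(t+h)\| - \|\m{x}(t)\|}{h} \le \frac{\|\m{x}(t+h) - \m{x}(t)\|}{h} = \left\| \frac{\m{x}(t+h) - \m{x}(t)}{h} \right\| .
\]
Letting $h \to 0^+$, the left side converges to $\frac{d}{dt}\|\m{x}(t)\|$ since $t \in E$, while the inner difference quotient converges to $\m{\dot{x}}(t)$; continuity of the norm then carries the right side to $\|\m{\dot{x}}(t)\|$, giving $\frac{d}{dt}\|\m{x}(t)\| \le \|\m{\dot{x}}(t)\|$.

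There is little genuine difficulty here, and the argument is essentially self-contained once the reverse triangle inequality is in hand. The one point that warrants care is the justification that \emph{both} the vector-valued function $\m{x}$ and the scalar function $\|\m{x}(\cdot)\|$ are differentiable almost everywhere, so that the two-sided derivative appearing in \eqref{deriv0} genuinely exists and agrees with the one-sided limit used in the estimate; this is where I would invoke the standard fact that scalar Lipschitz functions of one variable are absolutely continuous and therefore differentiable almost everywhere, applied componentwise to $\m{x}$ and directly to $\|\m{x}(\cdot)\|$. No appeal to the full strength of Rademacher's theorem is needed, since we are only dealing with functions of a single real variable.
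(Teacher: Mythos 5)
Your proof is correct and follows essentially the same route as the paper: the reverse triangle inequality gives Lipschitz continuity of $\|\m{x}(\cdot)\|$, almost-everywhere differentiability of both functions is invoked, and the bound \eqref{deriv0} follows from the one-sided difference quotient. The only (cosmetic) difference is that the paper cites Rademacher's Theorem where you use the equivalent one-dimensional fact that scalar Lipschitz functions are absolutely continuous and hence differentiable almost everywhere; your explicit restriction to $h \to 0^+$ is, if anything, slightly more careful than the paper's treatment of the sign of $\Delta t$.
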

\begin{proof}
For any $s, t \in [\sigma_1, \sigma_2]$, the triangle inequality gives
\[
\|\m{x}(s)\| = \|\m{x}(s) - \m{x}(t) + \m{x}(t)\| \le
\|\m{x}(s) - \m{x}(t)\| + \|\m{x}(t)\|.
\]
Rearrange this inequality to obtain
\begin{equation}\label{deriv1}
\|\m{x}(s)\| - \|\m{x}(t)\| \le \|\m{x}(s) - \m{x}(t)\|.
\end{equation}
Interchanging $s$ and $t$ in (\ref{deriv1}) yields
$\|\m{x}(t)\| - \|\m{x}(s)\| \le \|\m{x}(s) - \m{x}(t)\|$.
Hence, the absolute value of the difference
$\|\m{x}(s)\| - \|\m{x}(t)\|$ is bounded by
$\|\m{x}(t) - \m{x}(s)\|$.
Since $\m{x}(\cdot)$ is Lipschitz continuous, then so is
$\|\m{x}(\cdot)\|$.
It follows by Rademacher's Theorem that both $\m{x}(\cdot)$ and
$\|\m{x}(\cdot)\|$ are differentiable almost everywhere.
Suppose $t \in (\sigma_1, \sigma_2)$ is a point of differentiability
for both $\m{x}(\cdot)$ and $\|\m{x}(\cdot)\|$, and take $s = t + \Delta t$
in (\ref{deriv1}).
Dividing the resulting inequality by $\Delta t$ and letting $\Delta t$
tend to zero yields (\ref{deriv0}).
\end{proof}
\smallskip

The following result can be deduced from Gronwall's inequality.
\begin{lemma} \label{gronwall}
If $w : [0, T] \rightarrow \mathbb{R}$ is absolutely continuous
and for some nonnegative scalars $a$ and $b$,
\[
\dot{w}(t) \le a w(t) + b, \quad \mbox{for almost every } t \in [0, T],
\]
then for all $t \in [0,T]$,
we have
\begin{equation}\label{gronwallbound}
w(t) \le e^{at} (w(0) + bt) .
\end{equation}
\end{lemma}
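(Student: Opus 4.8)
The plan is to reduce the differential inequality to something I can integrate directly by introducing the standard integrating factor $e^{-at}$. I would define $v(t) = e^{-at} w(t)$ on $[0,T]$. Since $w$ is absolutely continuous and $t \mapsto e^{-at}$ is continuously differentiable on the compact interval $[0,T]$, the product $v$ is again absolutely continuous, and the product rule holds for almost every $t$: $\dot{v}(t) = e^{-at}\big(\dot{w}(t) - a w(t)\big)$.

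Next I would feed the hypothesis into this identity. Because $\dot{w}(t) \le a w(t) + b$ for almost every $t$, we have $\dot{w}(t) - a w(t) \le b$, and hence $\dot{v}(t) \le b\,e^{-at}$ for almost every $t \in [0,T]$. At this point I would invoke the standing assumption $a \ge 0$ together with $t \ge 0$ to note $e^{-at} \le 1$, which yields the cruder but cleaner bound $\dot{v}(t) \le b$ almost everywhere. Integrating the sharp factor $e^{-a\tau}$ would instead give $b(e^{at}-1)/a$ after multiplying back, and since $(e^{at}-1)/a = \int_0^t e^{a\tau}\,d\tau \le t e^{at}$ this is only tighter; the discarded factor is exactly what produces the stated form.

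The final step is to integrate. Since $v$ is absolutely continuous, the fundamental theorem of calculus applies and $v(t) - v(0) = \int_0^t \dot{v}(\tau)\,d\tau \le \int_0^t b\,d\tau = bt$. As $v(0) = w(0)$, this gives $v(t) \le w(0) + bt$, and multiplying through by $e^{at} \ge 0$ recovers the claimed bound $w(t) \le e^{at}\big(w(0)+bt\big)$.

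The steps are routine, so the only places requiring care are structural rather than computational. I would be explicit that it is the absolute continuity of $v$, not merely differentiability almost everywhere, that licenses the integration in the last step, and that the pointwise inequality on $\dot{v}$ need hold only almost everywhere for the integral bound to follow. I would also flag the role of the sign conditions: nonnegativity of $a$ is precisely what lets me replace $e^{-a\tau}$ by its bound $1$ to land on the stated inequality, and nonnegativity of $e^{at}$ is what preserves the direction of the inequality when I multiply back. No single step is a genuine obstacle; the main point is simply to keep the measure-theoretic bookkeeping honest.
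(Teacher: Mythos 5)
Your proof is correct. The paper itself gives no argument for this lemma---it merely remarks that the result ``can be deduced from Gronwall's inequality''---and your integrating-factor computation is precisely the standard deduction being alluded to: setting $v(t)=e^{-at}w(t)$, bounding $\dot v(t)\le be^{-at}\le b$ almost everywhere (using $a,b\ge 0$), and integrating via the fundamental theorem of calculus for absolutely continuous functions. Your bookkeeping is sound, including the observation that the crude bound $e^{-a\tau}\le 1$ is exactly what produces the stated (slightly non-sharp) form $e^{at}(w(0)+bt)$ rather than the tighter $e^{at}w(0)+b(e^{at}-1)/a$, so in effect you have supplied the proof the paper omitted.
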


The following Lipschitz result is deduced from Lemmas~\ref{Lip} and
\ref{gronwall}.
\begin{corollary} \label{IC}
Suppose that $\m{x}$ is an absolutely continuous solution
of $(\ref{IVP})$ and $\m{y}$ has the
same dynamics but a different initial condition:
\begin{equation}\label{y-IVP}
\m{\dot{y}}(t) = \m{F}(\m{y} (t), t), \quad \m{y}(0) = \m{y}_0.
\end{equation}
If for some $\rho > 0$ and $L \ge 0$, independent of $j$,
$\m{f}_j(\g{\chi}, t)$, $0 \le j \le k$, is continuous with respect to
$\g{\chi}$ and $t$ and Lipschitz continuous
in $\g{\chi}$, with Lipschitz constant $L$, on the tube
\begin{equation}\label{tube}
\{ (\g{\chi}, t) :
t \in [s_j, s_{j+1}] \mbox{ and }
\g{\chi} \in \C{B}_\rho(\m{x}(t)) \},
\end{equation}
then for any $\m{y}_0 \in \mathbb{R}^n$ which satisfies
$e^{LT} \|\m{x}_0 - \m{y}_0\| \le \rho$, the initial value problem
$(\ref{y-IVP})$ has a solution on $[0, T]$, and we have
\begin{equation}\label{errorw}
\|\m{y}(t) - \m{x}(t)\| \le e^{Lt} \|\m{y}_0 - \m{x}_0\|
\quad \mbox{for all } t \in [0, T].
\end{equation}
\end{corollary}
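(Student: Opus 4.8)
The plan is to combine the two preceding lemmas with a continuation (bootstrap) argument. First I would set $\m{z}(t) = \m{y}(t) - \m{x}(t)$ and $w(t) = \|\m{z}(t)\|$. On any subinterval $(s_j, s_{j+1})$ where $\m{y}$ exists and remains in the tube $(\ref{tube})$, both solutions obey the same field $\m{f}_j$, so $\m{\dot{z}}(t) = \m{f}_j(\m{y}(t), t) - \m{f}_j(\m{x}(t), t)$ for almost every $t$, and the Lipschitz hypothesis gives $\|\m{\dot{z}}(t)\| \le L\|\m{z}(t)\| = Lw(t)$. Since $\m{x}$ and $\m{y}$ are absolutely continuous, so is $\m{z}$, and hence so is $w$ because the norm is $1$-Lipschitz. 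By Lemma~\ref{Lip}, $\dot{w}(t) \le \|\m{\dot{z}}(t)\| \le Lw(t)$ almost everywhere, which is exactly the hypothesis of Lemma~\ref{gronwall} with $a = L$ and $b = 0$. Applying that lemma yields the error bound $(\ref{errorw})$, namely $w(t) \le e^{Lt}w(0) = e^{Lt}\|\m{y}_0 - \m{x}_0\|$.

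The subtlety is that this estimate is only valid so long as $\m{y}(t)$ stays inside the tube, where the Lipschitz bound applies, and we do not yet know that $\m{y}$ exists on all of $[0, T]$. To close this gap I would argue by continuation. On each interval $[s_j, s_{j+1}]$ the field $\m{f}_j$ is continuous and Lipschitz in $\g{\chi}$, so the Picard--Lindel\"{o}f theorem gives a local solution; piecing these together across the switch points, where $\m{y}$ remains continuous, defines a maximal interval $[0, T^*)$ on which $\m{y}(t)$ exists and stays in the tube. Suppose, for contradiction, that $T^* \le T$. On every compact subinterval of $[0, T^*)$ the differential inequality above holds, so Lemma~\ref{gronwall} gives $w(t) \le e^{Lt}\|\m{y}_0 - \m{x}_0\| \le e^{LT}\|\m{y}_0 - \m{x}_0\| \le \rho$, the last step being precisely the hypothesis imposed on $\m{y}_0$.

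Hence $\m{y}(t)$ never reaches the boundary of the tube on $[0, T^*)$, and the bound confines $\m{y}$ to a compact set, so $\lim_{t \to T^*} \m{y}(t)$ exists and lies strictly inside the tube; restarting the initial value problem from this point extends the solution beyond $T^*$, contradicting maximality. Therefore $T^* = T$, the solution exists on all of $[0, T]$ with $\m{y}(t)$ in the tube throughout, and the bound $(\ref{errorw})$ holds on $[0, T]$ by a final application of Lemma~\ref{gronwall}. The main obstacle is exactly this interdependence between existence and the a priori estimate: the estimate presupposes that the solution stays in the tube, while remaining in the tube is what the estimate guarantees. The bootstrap resolves the circularity, and the factor $e^{LT}$ in the hypothesis is chosen precisely so that $e^{Lt}\|\m{y}_0 - \m{x}_0\|$ cannot exceed $\rho$ anywhere on $[0, T]$.
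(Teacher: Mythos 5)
Your proposal is correct and takes essentially the same route as the paper's proof: both derive $\dot{w}(t) \le L w(t)$ for $w(t) = \|\m{y}(t)-\m{x}(t)\|$ from the Lipschitz hypothesis via Lemma~\ref{Lip}, apply Lemma~\ref{gronwall} with $b=0$ to get $w(t) \le e^{Lt}\|\m{y}_0-\m{x}_0\| \le \rho$, and then use local existence together with the fact that this bound keeps $\m{y}(t)$ inside the tube to continue the solution across $[0,T]$. Your explicit maximal-interval bootstrap simply spells out the continuation step that the paper states tersely ("for $t<T$, $\m{y}(t)$ continues to lie in the interior of the tube").
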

\begin{proof}
For $t = 0$ and $j = 0$, $\m{y}_0$ is in the interior of a face of the
tube (\ref{tube}).
Due to the Lipschitz continuity of $\m{F}(\cdot, t)$,
a solution to (\ref{y-IVP}) exists for near $t = 0$.
Define $w(t) = \|\m{y}(t) - \m{x}(t)\|$,
subtract the differential equations (\ref{IVP}) and (\ref{y-IVP}), and
take the norm of each side to obtain
\begin{equation}\label{normdot}
\| \m{\dot{x}}(t) - \m{\dot{y}}(t)\| \le L w(t)
\end{equation}
for any $t$ where $\m{y}$ satisfies (\ref{y-IVP}) and
$\m{y}(t)$ lies within the tube around $\m{x}(t)$ where
$\m{F}(\cdot, t)$ satisfies the Lipschitz continuity property.
Any solution $\m{x}$ of (\ref{IVP}) and $\m{y}$ of (\ref{y-IVP})
is Lipschitz continuous since their derivatives are bounded.
Lemma~\ref{Lip} and equation (\ref{normdot}) imply that
$\dot{w}(t) \le L w(t)$ for $t$ near 0.
Hence, for $t$ near 0, the bound (\ref{gronwallbound}) of Lemma~\ref{gronwall}
with $b = 0$ yields
\[
w(t) \le e^{Lt} w(0) = e^{Lt} \|\m{x}_0 - \m{y}_0\| < \rho \quad
\mbox{when } t < T.
\]
For $t < T$, $\m{y}(t)$ continues to lie in the interior of the
tube where $\m{F}(\cdot, t)$ satisfies the Lipschitz property,
which leads to (\ref{errorw}).
\end{proof}

In order to analyze the objective derivative with respect to
the switch points,
we need to make a regularity assumption concerning the functions $\m{f}_j$ in
the initial value problem (\ref{IVP}) to ensure the stability and uniqueness of
solutions to (\ref{IVP}) as the switching points are perturbed around a
given $\m{s}$.
\smallskip

{\bf State Regularity.}
Let $\m{x}$ denote an absolutely continuous solution to (\ref{IVP}).
It is assumed that there exist constants $\rho > 0$,
$s_j^- \in (s_{j-1}, s_j)$, and $s_j^+ \in (s_j , s_{j+1})$,
$1 \le j \le k$, such that
$\m{f}_j$ is continuously differentiable on the tube
\[
\C{T}_j = \{ (\g{\chi}, t) : t \in [s_j^-, s_{j+1}^+]
\mbox{ and } \g{\chi} \in \C{B}_\rho(\m{x}(t)) \},
\quad 0 \le j \le k,
\]
where $s_0^- = 0$ and $s_{k+1}^+ = T$.
Moreover, $\m{f}_j(\g{\chi}, t)$ is Lipschitz continuously differentiable
with respect to $\g{\chi}$ on $\C{T}_j$,
uniformly in $j$ and $t \in [s_j^-, s_j^+]$.
\smallskip

\begin{theorem} \label{spa1}
Suppose that $\m{s}$ is a valid choice for the switching points,
that the State Regularity property holds, and
$C$ is Lipschitz continuously differentiable in a neighborhood of $\m{x}(T)$.
Then for $j = 1, 2, \ldots , k$, $C(\m{s})$ is differentiable with
respect to $s_j$ and
\begin{equation} \label{case1d}
\frac{\partial C}{\partial s_j} (\m{s}) =
H_{j-1} (\m{x}(s_j), \m{p}(s_j), s_j)
- H_{j}(\m{x}(s_j), \m{p}(s_j), s_j),
\end{equation}
where $H_j (\m{x}, \m{p}, t) = \m{p}\m{f}_j(\m{x}, t)$,
and the row vector $\m{p}: [0, T] \rightarrow \mathbb{R}^n$
is the solution to the linear differential equation
\begin{equation}\label{p-case1}
\m{\dot{p}}(t) = - \m{p}(t) \nabla_x \m{F}(\m{x}(t), t),
\quad t \in [0, T],
\quad
\m{p}(T) = \nabla C(\m{x}(T)) .
\end{equation}
\end{theorem}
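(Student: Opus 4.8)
The plan is to perturb a single switch point $s_j$ to $s_j + \Delta s$, holding all other switch points fixed, and to track the induced change in the terminal state $\m{x}(T)$ to first order in $\Delta s$. The costate $\m{p}$ defined by (\ref{p-case1}) is precisely the adjoint variable that converts a first-order terminal sensitivity into its value back at $s_j$, so once the perturbation injected near $s_j$ is identified, the formula (\ref{case1d}) should follow from a single adjoint identity.

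First I would note that, up to the earlier of the two times $s_j$ and $s_j+\Delta s$, the original trajectory $\m{x}$ and the perturbed trajectory $\m{x}^\Delta$ solve the same initial value problem (\ref{IVP}) and hence coincide. On the short interval of length $|\Delta s|$ separating these two times the trajectories obey different dynamics: for $\Delta s > 0$ the perturbed trajectory still uses $\m{f}_{j-1}$ while the original has already switched to $\m{f}_j$, and for $\Delta s < 0$ the roles are reversed. A first-order Taylor expansion across this interval, using the $C^1$ regularity of $\m{f}_{j-1}$ and $\m{f}_j$ near $(\m{x}(s_j), s_j)$ supplied by the overlapping tubes of the State Regularity property, gives in both cases, at $\tau := \max\{s_j, s_j + \Delta s\}$,
\[
\m{x}^\Delta(\tau) - \m{x}(\tau) = \Delta s\, \m{v} + o(\Delta s), \quad
\m{v} := \m{f}_{j-1}(\m{x}(s_j), s_j) - \m{f}_j(\m{x}(s_j), s_j).
\]
That the two one-sided expansions produce the same vector $\m{v}$ is what makes $C$ two-sided differentiable in $s_j$, and this is exactly where the overlapping tubes matter, since $\m{f}_{j-1}$ must be evaluated slightly past $s_j$ and $\m{f}_j$ slightly before it.

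Next I would propagate this perturbation forward on $[\tau, T]$, where both trajectories now obey the common dynamics $\m{F}$. Let $\m{z}$ solve the variational equation $\m{\dot{z}}(t) = \nabla_x \m{F}(\m{x}(t), t)\,\m{z}(t)$ on $[s_j, T]$ with $\m{z}(s_j) = \m{v}$; since $\tau - s_j = \C{O}(\Delta s)$, starting the flow at $\tau$ rather than at $s_j$ changes the terminal value only by $o(\Delta s)$. Using Corollary~\ref{IC} to keep $\m{x}^\Delta$ inside the tube and to bound the trajectory difference by $\C{O}(\Delta s)$ through (\ref{errorw}), together with the uniform Lipschitz continuity of $\nabla_x \m{f}_j$, I would show $\m{x}^\Delta(T) - \m{x}(T) = \Delta s\, \m{z}(T) + o(\Delta s)$, so that $\partial C/\partial s_j = \nabla C(\m{x}(T))\,\m{z}(T)$ by the Lipschitz differentiability of $C$. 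Finally, differentiating the scalar $\m{p}(t)\m{z}(t)$ and substituting (\ref{p-case1}) and the variational equation gives $\frac{d}{dt}[\m{p}(t)\m{z}(t)] = 0$ on $(s_j, T]$, so $\m{p}(T)\m{z}(T) = \m{p}(s_j)\m{z}(s_j) = \m{p}(s_j)\m{v}$. Since $\m{p}(T) = \nabla C(\m{x}(T))$, this yields $\partial C/\partial s_j = \m{p}(s_j)\m{v} = H_{j-1}(\m{x}(s_j),\m{p}(s_j),s_j) - H_j(\m{x}(s_j),\m{p}(s_j),s_j)$, which is (\ref{case1d}).

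The hard part will be making the two expansions genuinely first order, i.e.\ showing that the remainders are $o(\Delta s)$. On the short interval this means bounding the $\C{O}(\Delta s^2)$ error from Taylor expanding two different dynamics over a window of length $|\Delta s|$, and on the long interval it means showing that the true difference $\m{x}^\Delta - \m{x}$ is approximated by the linear variational flow $\Delta s\,\m{z}$ with error $o(\Delta s)$. This is where the uniform Lipschitz differentiability built into the State Regularity property and the stability estimate (\ref{errorw}) of Corollary~\ref{IC} must be combined, patching the short-interval estimate at $\tau$ to the variational estimate on the remainder of $[s_j, T]$.
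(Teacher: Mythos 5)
Your proposal is correct and follows essentially the same route as the paper's proof: a Taylor expansion across the short interval between $s_j$ and $s_j+\Delta s$ to identify the injected perturbation $\Delta s\,[\m{f}_{j-1}(\m{x}(s_j),s_j)-\m{f}_j(\m{x}(s_j),s_j)]$, a linearized (variational) flow $\m{z}$ propagated to $T$ with the nonlinear-versus-linear discrepancy controlled by Gronwall-type estimates built from Corollary~\ref{IC} and the State Regularity property, and finally the adjoint identity (you use $\frac{d}{dt}[\m{p}(t)\m{z}(t)]=0$ pointwise where the paper integrates by parts, which is the same computation) to convert $\nabla C(\m{x}(T))\m{z}(T)$ into $\m{p}(s_j)\m{v}$. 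The only notable difference is cosmetic but welcome: you treat both signs of $\Delta s$ explicitly to establish two-sided differentiability, a point the paper leaves implicit.
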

\smallskip

\begin{remark}\label{notequal}
The formula $(\ref{case1d})$ for the derivative of
the objective $C$ with respect to a switching point generalizes the
result derived in \cite[Thm. 2]{bang_bang} for a bang-bang control.
Note that equation $(\ref{p-case1})$ differs from the
costate equation in $(\ref{p})$ since the Jacobian of $\m{f}$ appears in
$(\ref{p})$, while the Jacobian of $\m{F}$ appears in $(\ref{p-case1})$.
However, when the control $\m{u}$ enters the Hamiltonian linearly,
$\nabla_x \m{F}$ approaches $\nabla_x \m{f}$ as the switch points approach
the switch points for an optimal solution of the control problem,
and the solution $\m{p}$ of $(\ref{p-case1})$ approaches the costate of
the optimal solution.
\end{remark}
\smallskip

\begin{proof}
Let $s$ denote the switching point $s_j$.
To compute the derivative of the cost with respect to $s$, we need to
compare the cost associated with $s$ to the cost gotten when $s$ is changed
to $s + \Delta s$.
Let $\m{x}$ and $\m{y}$ denote the solutions of (\ref{IVP}) associated with
the original and the perturbed switching points respectively.
The dynamics associated with these two solutions are identical except for
the time interval $[s, s+\Delta s]$.
With the definitions $\m{F}_0 = \m{f}_{j-1}$ and $\m{F}_1 = \m{f}_j$,
the dynamics for $t \in [s, s+\Delta s]$ and
for $\Delta s$ sufficiently small are
\begin{equation} \label{sdynamic}
\m{\dot{x}}(t) = \m{F}_1 (\m{x}(t), t) \quad \mbox{and} \quad
\m{\dot{y}}(t) = \m{F}_0 (\m{y}(t), t) .
\end{equation}
The objective value associated with the switching point $s$ is
$C(\m{x}(T))$, where $\m{x}$ is the solution of the initial value problem
(\ref{IVP}).
The objective value associated with the switching point $s + \Delta s$ is
$C(\m{y}(T))$ where $\m{y}$ has the same dynamics as $\m{x}$ except on the
interval $[s, s+\Delta s]$.
To evaluate the derivative of the objective with respect to $s$, we need to
form the ratio $[C(\m{y}(T)) - C(\m{x}(T))]/\Delta s$, and then let
$\Delta s$ tend to zero.

Since the dynamics for $\m{x}$ and $\m{y}$ are identical except for the
interval $[s, s+ \Delta s]$,
$\m{x}(t) = \m{y}(t)$ for $t \le s$.
Let $\m{x}_s$ denote $\m{x}(s) = \m{y}(s)$. 
Expanding $\m{x}$ and $\m{y}$ in Taylor series around $\m{x}_s$ yields
\begin{eqnarray}
\m{x}(s + \Delta s) &=& \m{x}_s + (\Delta s) \m{F}_1 (\m{x}_s, s) +
\C{O}((\Delta s)^2), \label{x-Taylor} \\
\m{y}(s + \Delta s) &=& \m{x}_s + (\Delta s) \m{F}_0 (\m{x}_s, s) +
\C{O}((\Delta s)^2), \label{y-Taylor}
\end{eqnarray}
where the remainder term is $\C{O}((\Delta s)^2)$ due to the
State Regularity property, which ensures the Lipschitz continuous
differentiability of $\m{F}_0$ and $\m{F}_1$.
For $t > s + \Delta s$, $\m{\dot{x}}(t) = \m{F} (\m{x}(t), t)$ and
$\m{\dot{y}}(t) = \m{F} (\m{y}(t), t)$ since
the dynamics of $\m{x}$ and $\m{y}$ only differ on $[s, s+\Delta s]$.
Apply Corollary~\ref{IC} to the interval $[s+\Delta s, T]$.
Based on the expressions (\ref{x-Taylor}) and (\ref{y-Taylor}) for
the values of $\m{x}$ and $\m{y}$ at $s + \Delta s$,
it follows from Corollary~\ref{IC} that
\begin{equation}\label{ychange}
\|\m{y}(t) - \m{x}(t)\| = \C{O}(\Delta s), \quad \mbox{for all }
t \in [s+\Delta s, T].
\end{equation}

Let $\m{z} : [s+\Delta s, T] \rightarrow \mathbb{R}^n$ be the solution
to the linear differential equation
\begin{equation}\label{z-def}
\m{\dot{z}}(t) = \nabla_x \m{F} (\m{x}(t), t) \m{z}(t), \quad
\m{z}(s+\Delta s) = \Delta s [\m{F}_0(\m{x}_s, s) -\m{F}_1(\m{x}_s, s)].
\end{equation}
By assumption, $\m{x}$ is absolutely continuous and hence bounded;
consequently, there exists a scalar $a$ such that
$\|\nabla_x \m{F} (\g{\chi}, t)\| \le a$ for all
$t \in [s, T]$ and $\g{\chi} \in \C{B}_\rho (\m{x}(t))$.
Take the norm of the differential equation for $\m{z}$ and apply Lemma~\ref{Lip}
to obtain
\[
\frac{d \|\m{z}(t)\|}{dt} \le a \|\m{z}(t)\| \quad \mbox{for all }
t \in [s+\Delta s, T].
\]
By Lemma~\ref{gronwall} with $b = 0$ and the specified initial condition
in (\ref{z-def}), it follows that
\begin{equation}\label{z-bound}
\|\m{z}(t)\| = \C{O}(\Delta s) \quad \mbox{for all } t \in [s+\Delta s, T].
\end{equation}

Define $\g{\delta}(t) = \m{y}(t) - \m{x}(t) - \m{z}(t)$ for every
$t \in [s + \Delta s, T]$ and
\[
{\m{x}}(\alpha,t) = \m{x}(t) + \alpha(\m{y} (t) - \m{x}(t)).
\]
Differentiating $\g{\delta}$ and utilizing a Taylor expansion with
integral remainder term, we obtain for all $t \in [s + \Delta s, T]$,
\begin{eqnarray}
\g{\dot{\delta}}(t) &=& \m{\dot{y}}(t) - \m{\dot{x}}(t) - \m{\dot{z}}(t) =
\m{F}(\m{y}(t), t) - \m{F} (\m{x}(t), t) -
\nabla_x \m{F} (\m{x}(t), t) \m{z}(t) \label{deltadot} \\
&=& \left( \int_0^1 \nabla_x \m{F} ({\m{x}}(\alpha,t), t) \; d\alpha
\right) (\m{y}(t) - \m{x}(t))
- \left( \int_0^1 \nabla_x \m{F} (\m{x}(t), t) \; d\alpha \right)
\m{z}(t) \nonumber \\
&=& \left( \int_0^1 [\nabla_x \m{F} ({\m{x}}(\alpha,t), t) -
\nabla_x \m{F} (\m{x}(t), t) ] \; d\alpha \right) \m{z}(t)
+ \left( \int_0^1 \nabla_x \m{F} ({\m{x}}(\alpha,t), t) \; d\alpha \right)
\g{\delta}(t) . \nonumber
\end{eqnarray}
Take $\Delta s$ in (\ref{ychange}) small enough that $\m{y}(t)$ lies in the
tube around $\m{x}(t)$ where $\nabla_x \m{F}$ is Lipschitz continuous.
By the definition of $a$, we have
$\|\nabla_x \m{F} ({\m{x}}(\alpha,t), t)\| \le a$.
If $L$ is the Lipschitz constant for $\nabla_x \m{F}$, then
by (\ref{ychange}), we have
\[
\|\nabla_x \m{F} ({\m{x}}(\alpha,t), t) - \nabla_x \m{F} (\m{x}(t), t)\|
\le \alpha L \|\m{y}(t) - \m{x}(t)\| = \C{O}(\Delta s) .
\]
Hence, taking the norm of each side of (\ref{deltadot})
and utilizing Lemma~\ref{Lip} on the left side, and the triangle inequality
and the bound (\ref{z-bound}) on the right side yields
\begin{equation}\label{gronwalldelta}
\frac{d \|\g{\delta}(t)\|}{dt} \le
a \|\g{\delta}(t)\| + \C{O}((\Delta s)^2) .
\end{equation}
By (\ref{x-Taylor}), (\ref{y-Taylor}), and (\ref{z-def}),
we have
\[
\g{\delta}(s + \Delta s) =
\m{y}(s + \Delta s) - \m{x}(s + \Delta s) - \m{z}(s + \Delta s) =
\C{O}((\Delta s)^2).
\]
Consequently, by (\ref{gronwalldelta}) and Lemma~\ref{gronwall}, we deduce that
\begin{equation}\label{deltabound}
\|\g{\delta}(t)\| = \C{O}((\Delta s)^2) \quad \mbox{for all }
t \in [s+\Delta s, T].
\end{equation}

If $\m{p}$ is the solution of (\ref{p-case1}) and $\m{z}$ is the solution
of (\ref{z-def}), then we have
\begin{eqnarray}
0 &=& \int_{s+\Delta s}^T \m{p}(t)
\bigg[\nabla_x \m{F}(\m{x}(t), t)\m{z}(t) - \m{\dot{z}}(t)\bigg] \; dt
\nonumber \\
&=& \int_{s+\Delta s}^T \bigg[ \m{p}(t)
\nabla_x \m{F}(\m{x}(t), t) + \m{\dot{\m{p}}}(t)\bigg] \m{z}(t) \; dt
- \m{p}(T)\m{z}(T) +\m{p}(s+\Delta s) \m{z}(s+\Delta s)
\nonumber  \\[.05in]
&=& - \m{p}(T)\m{z}(T) +\m{p}(s+\Delta s) \m{z}(s+\Delta s)
\nonumber  \\[.05in]
&=& 
\Delta s \m{p}(s+\Delta s) (\m{F}_0(\m{x}_s, s) - \m{F}_1(\m{x}_s, s))
- \nabla C(\m{x}(T))\m{z}(T). \label{BC}
\end{eqnarray}

Since $C$ is Lipschitz continuously differentiable in a neighborhood
of $\m{x}(T)$, the difference between the perturbed objective and the original
objective can be expressed
\begin{equation}\label{deltaC}
C(\m{y}(T)) - C(\m{x}(T)) = \nabla C(\m{x}_{\Delta})(\m{y}(T) - \m{x}(T))
\end{equation}
where $\m{x}_\Delta \in [\m{y}(T), \m{x}(T)]$;
that is, $\m{x}_\Delta$ is a point on the line segment connecting
$\m{y}(T)$ and $\m{x}(T)$.
Since the distance between $\m{x} (t)$ and $\m{y}(t)$ is $\C{O}(\Delta s)$ by
(\ref{ychange}), the distance between $\m{x}_\Delta$ and $\m{x}(T)$ is
$\C{O}(\Delta s)$.
Add the right side of (\ref{BC}) to the right side of (\ref{deltaC}) and
substitute
\[
\m{y}(T) - \m{x}(T) =
\m{y}(T) - \m{x}(T) -\m{z}(T) + \m{z}(T) = \g{\delta}(T) + \m{z}(T)
\]
to obtain
\begin{eqnarray}
C(\m{y}(T)) - C(\m{x}(T)) &=& \nabla C(\m{x}_{\Delta})\g{\delta}(T) +
[\nabla C(\m{x}_{\Delta})- \nabla C(\m{x}(T)]\m{z}(T) \nonumber \\
&& \quad \quad + \Delta s \m{p}(s+\Delta s)
[\m{F}_0(\m{x}_s, s) - \m{F}_1(\m{x}_s, s)].
\label{final}
\end{eqnarray}
By (\ref{deltabound}), $\|\g{\delta}(T)\| = \C{O}((\Delta s)^2)$ so
$|\nabla C(\m{x}_{\Delta})\g{\delta}(T)| = \C{O}((\Delta s)^2)$.
Since $C$ is Lipschitz continuously differentiable at $\m{x}(T)$,
the distance from $\m{x}_{\Delta}$ to $\m{x}(T)$ is at most $\C{O}(\Delta s)$
by (\ref{ychange}), and $\m{z}(T) = \C{O}(\Delta s)$ by (\ref{z-bound}),
we have
\[
\|[\nabla C(\m{x}_{\Delta})- \nabla C(\m{x}(T)]\m{z}(T) \| =
\C{O}((\Delta s)^2).
\]
Consequently, the first two terms on the right side of (\ref{final}) are
$\C{O}((\Delta s)^2)$.
Divide (\ref{final}) by $\Delta s$ and let $\Delta s$ tend
to zero to obtain
\[
\lim_{\Delta s \rightarrow 0}
\frac{C(\m{y}(T)) - C(\m{x}(T))}{\Delta s}
=  \m{p}(s) [\m{F}_0(\m{x}_s, s) - \m{F}_1(\m{x}_s, s)] ,
\]
which completes the proof since $\m{F}_0 = \m{f}_{j-1}$ and
$\m{F}_1 = \m{f}_j$.
\end{proof}
\section{Objective Derivative in Case~2}
\label{dscase2}
In Case~2, an optimal control which minimizes the Hamiltonian
also depends on $\m{p}$ in the singular region, so the control has the form
$\m{u}(t) = \g{\phi}_j(\m{x}(t), \m{p}(t), t)$ for $t \in (s_j, s_{j+1})$.
The functions $\m{f}_j$ and $\m{F}$ of Section~\ref{dscase1} now have the form
$\m{f}_j (\m{x}, \m{p}, t) = \m{f}(\m{x}, \g{\phi}_j (\m{x}, \m{p}, t))$
and $\m{F}(\m{x}, \m{p}, t) = \m{f}_j (\m{x}, \m{p}, t)$ for
$t \in (s_j, s_{j+1})$.
The Jacobian appearing in the costate dynamics is
$\nabla_x \m{f}(\m{x}, \m{u})$;
in the Switch Point Algorithm, we evaluate this dynamics at $\m{u} =$
$\g{\phi}_j(\m{x}, \m{p}, t)$.
Hence, analogous to definition given in Section~\ref{dscase1}, let us define
\[
\m{f}_{jx}(\m{x}, \m{p}, t) = \nabla_x \m{f}(\m{x}, \m{u})
\Big|_{\displaystyle \m{u} = \g{{\phi}}_j(\m{x}, \m{p}, t)}.
\]
Also, we define
\[
\m{F}_x (\m{x}, \m{p}, t) = \m{f}_{jx} (\m{x}, \m{p}, t),
\quad \mbox{for } t \in (s_j, s_{j+1}).
\]

In the Switch Point Algorithm, we consider the coupled system
\begin{equation} \label{coupled}
\m{\dot{\m{x}}}(t) = \m{F}(\m{x}(t), \m{p}(t), t), \quad
\m{\dot{\m{p}}}(t) = -\m{p}(t)\m{F}_x(\m{x}(t), \m{p}(t), t),
\end{equation}
where $(\m{x}(0), \m{p}(0)) = (\m{x}_0, \m{p}_0)$.
If $\m{u}^*$ is a local minimizer for the control problem (\ref{CP})
and $(\m{x}^*, \m{p}^*)$ are the associated state and costate, then
we could recover $\m{u}^*(t) =\g{\phi}_j(\m{x}^*(t), \m{p}^*(t), t)$,
$t \in (s_j, s_{j+1})$, by integrating the coupled system
(\ref{coupled}) forward in time starting from the initial condition
$(\m{x}(0), \m{p}(0)) = (\m{x}_0, \m{p}^*(0))$.
From this perspective, we can think of the objective
$C(\m{x}(T))$ as being a function $C(\m{s}, \m{p}_0)$ that depends
on both the switching points and the starting value $\m{p}_0$
for the costate (the starting condition for the state $\m{x}_0$ is given).
To solve the control problem, we will search for a local minimizer of
$C(\m{s}, \m{p}_0)$.
Again, to exploit superlinearly convergent optimization algorithms,
the derivatives of $C$ with respect to both $\m{s}$ and $\m{p}_0$
should be evaluated.

The derivative of the objective with respect to the switching points
in Case~2 is a corollary of Theorem~\ref{spa1}.
In this case, the $\m{x}$ that satisfies (\ref{IVP}) is identified with the
pair $(\m{x}, \m{p})$ which solves
the coupled system (\ref{coupled}).
The pair $(\m{x}, \m{p})$ might be viewed as a {\it generalized state} in the
sense that for a given starting value $\m{p}(0) = \m{p}_0$ and for a given
choice of the switch points, we can in principle integrate forward in time
the coupled system (\ref{coupled}) to evaluate the objective $C(\m{x}(T))$.
The generalized version of the State Regularity property,
which applies to the pair $(\m{x}, \m{p})$, is the following:
\smallskip

{\bf Generalized State Regularity.}
Let $(\m{x}, \m{p})$ denote an absolutely continuous solution
to (\ref{coupled}).
It is assumed that there exist constants $\rho > 0$,
$s_j^- \in (s_{j-1}, s_j)$, and $s_j^+ \in (s_j , s_{j+1})$,
$1 \le j \le k$, such that the pair
$(\m{f}_j, \m{f}_{jx})$ is continuously differentiable on the tube
\[
\C{T}_j = \{ (\g{\chi}, t) : t \in [s_j^-, s_{j+1}^+]
\mbox{ and } \g{\chi} \in \C{B}_\rho(\m{x}(t)) \},
\quad 0 \le j \le k,
\]
where $s_0^- = 0$ and $s_{k+1}^+ = T$.
Moreover, $(\m{f}_j(\g{\chi}, t), \m{f}_{jx}(\g{\chi}, t))$
is Lipschitz continuously differentiable
with respect to $\g{\chi}$ on $\C{T}_j$,
uniformly in $j$ and $t \in [s_j^-, s_j^+]$.
\smallskip

The {\it generalized costate} associated with the system (\ref{coupled})
is a row vector $\m{y} \in \mathbb{R}^{2n}$ whose first $n$ components are
denoted $\m{y}_1$ and whose second $n$ components are denoted $\m{y}_2$.
The generalized Hamiltonian is defined by
\[
\C{H}_j(\m{x}, \m{p}, \m{y}, t) =
\m{y}_1\m{f}_j(\m{x}, \m{p}, t)
- \m{p} \m{f}_{jx} (\m{x}, \m{p}, t)\m{y}_2\tr , \quad
0 \le j \le k .
\]
The generalized costate $\m{y}: [0, T] \rightarrow \mathbb{R}^{2n}$
is the solution of the linear system of differential equations
\begin{eqnarray}
\dot{\m{y}}_1 (t) &=& -\nabla_x \C{H}_j(\m{x}(t), \m{p}(t), \m{y}(t), t),
\quad
\m{y}_1(T) = \nabla C(\m{x}(T)) \label{case2-x} \\
\dot{\m{y}}_2 (t) &=& -\nabla_p \C{H}_j(\m{x}(t), \m{p}(t), \m{y}(t), t),
\quad \m{y}_2(T) = \m{0} \label{case2-p}
\end{eqnarray}
on $(s_{j}, s_{j+1})$ for $j = k,~ k-1,~ \ldots, ~0$.
If the Generalized State Regularity property holds, then
by Theorem~\ref{spa1}, we have
\begin{equation} \label{case2d}
\frac{\partial C}{\partial s_j} (\m{s}, \m{p}_0) =
\C{H}_{j-1}(\m{x}(s_j), \m{p}(s_j), \m{y}(s_j), s_j)
- \C{H}_j(\m{x}(s_j), \m{p}(s_j), \m{y}(s_j), s_j).
\end{equation}
for $j = 1,~ 2,~ \ldots,~ k$.
Note that the boundary condition for $\m{y}_2$ is $\m{y}_2(T) = \m{0}$
since there is no $\m{p}$ in the objective, the objective $C$ only depends
on $\m{x}(T)$.
\smallskip

\begin{remark} \label{nop}
If the formula $(\ref{case2d})$ is used to evaluate the derivative of
the objective with respect to a switch point in the case where
$\g{\phi}$ does not depend on $\m{p}$, then the formula $(\ref{case2d})$
reduces to the formula $(\ref{case1d})$.
In particular, when $\g{\phi}$ does not depend on $\m{p}$,
the dynamics for $\m{y}_2$ becomes
\[
\dot{\m{y}}_2 (t) = \m{y}_2 \m{F}_x(\m{x}, \g{\phi}_j (\m{x}(t), t))\tr, \quad
t \in (s_j, s_{j+1}), \quad j = k, k-1, \ldots, 0, \quad \m{y}_2(T) = \m{0}.
\]
Consequently, $\m{y}_2$ is the solution to a linear differential equation
with the initial condition $\m{y}_2(T) = \m{0}$.
The unique solution is $\m{y}_2 = \m{0}$, and when $\m{y}_2 = \m{0}$,
the equation $(\ref{case2-x})$ is the same as $(\ref{p-case1})$.
Thus $\m{y}_1 = \m{p}$ and the formula $(\ref{case2d})$ is the same as
$(\ref{case1d})$.
\end{remark}
\smallskip

Now let us consider the gradient of $C(\m{s}, \m{p}_0)$ with respect to
$\m{p}_0$.
Let $(\m{x}, \m{p})$ denote a solution of (\ref{coupled}) for a given
starting condition $\m{p}(0) = \m{p}_0$, and let $(\m{\bar{x}}, \m{\bar{p}})$
denote a solution corresponding to $\m{p}(0) = \m{\bar{p}}_0$.
Let $\m{y}$ denote the generalized costate associated with
$(\m{\bar{x}}, \m{\bar{p}})$.
Since $(\m{x},\m{p})$ is a solution of (\ref{coupled}), we have
\begin{equation}\label{int}
0 = \int_0^T \m{y}_1(t) [\m{F}(\m{x}(t), \m{p}(t), t) - \m{\dot{x}}(t)]
- [\m{p}(t)\m{F}_x(\m{x}(t), \m{p}(t), t) + \m{\dot{p}}(t)] \m{y}_2\tr(t) \; dt.
\end{equation}
The two derivative terms in (\ref{int}) are integrated by parts to obtain
\begin{equation}\label{boundary}
-[ \langle \m{y}_1, \m{\dot{x}}\rangle +
\langle \m{y}_2, \m{\dot{p}}\rangle] =
\langle \m{\dot{y}}_1, \m{x}\rangle +
\langle \m{\dot{y}}_2, \m{p}\rangle
+ \m{y}_1(0) \m{x}_0 + \m{y}_2(0)\m{p}_0\tr
- \nabla C(\m{\bar{x}}(T)) \m{x}(T),
\end{equation}
where $\langle \cdot, \cdot \rangle$ denotes the $L^2$ inner product on
$[0, T]$, and the boundary conditions in (\ref{coupled}), (\ref{case2-x}),
and (\ref{case2-p}) are used to simplify the boundary terms.

We now combine (\ref{int}) and (\ref{boundary}), differentiate
the resulting identity with respect to $\m{p}_0$, and evaluate the
derivative at $\m{p}_0 = \m{\bar{p}}_0$.
Recall that $\m{y}$ is independent of $\m{p}_0$ since it corresponds to
(\ref{case2-x}) and (\ref{case2-p}) in the special case
where $(\m{x}, \m{p}) = (\m{\bar{x}}, \m{\bar{p}})$.
The only terms depending on $\m{p}_0$ are those involving $\m{x}$ and $\m{p}$,
the solution of (\ref{coupled}).
In particular, the partial derivatives of the three boundary terms
$\m{y}_1(0) \m{x}_0$, $\m{y}_2(0)\m{p}_0\tr$, and
$\nabla C(\m{\bar{x}}(T)) \m{x}(T)$ with respect to $\m{p}_0$
are zero, $\m{y}_2(0)$ and
$\nabla C(\m{\bar{x}}(T))\partial \m{x}(T)/\partial \m{p}_0$ respectively.
When we differentiate (\ref{int}) and (\ref{boundary}) with respect to
$\m{p}_0$ and evaluate at $\m{p}_0 = \m{\bar{p}}_0$,
every term cancels except for these three terms
(and one of these three terms is zero).
To see how these terms in the integrals cancel,
let us consider those terms with the
common factor $(\partial \m{x} / \partial \m{p}_0)(t)$.
This factor in the integral is multiplied by
\[
\dot{\m{y}}_1 (t) + \nabla_x \C{H}_j(\m{x}(t), \m{p}(t), \m{y}(t), t),
\]
which vanishes for $\m{x} = \m{\bar{x}}$ and $\m{p} = \m{\bar{p}}$ by
(\ref{case2-x}).
The remaining terms with the common factor
$(\partial \m{p} / \partial \m{p}_0)(t)$ vanish due (\ref{case2-p}).
After taking into account the three boundary terms in (\ref{boundary}),
we obtain
\begin{equation}\label{partial_p0}
\left. \frac{\partial C(\m{s}, \m{p}_0)}{\partial \m{p}_0}
\right|_{\m{p}_0 = \m{\bar{p}}_0} =
\nabla C(\m{\bar{x}}(T))\left. \frac{\partial \m{x}(T)}{\partial \m{p}_0}
\right|_{\m{p}_0 = \m{\bar{p}}_0} = \m{y}_2(0).
\end{equation}

In summary, the gradient of the objective with respect to
$\m{p}_0$ is available almost for free after evaluating
the derivative of the objective with respect to the switching points;
the gradient is simply $\m{y}_2(0)$.
As pointed out in Remark~\ref{nop},
$\m{y}_2 = \m{0}$ when the $\g{\phi}_j$ are independent of $\m{p}$.

\section{Free Terminal Time}
\label{free}
So far, the terminal time $T$ has been fixed.
Let us now suppose that the terminal time is free, and we are minimizing
over both the terminal time $T$ and over the control $\m{u}$.
It is assumed that the control constraint set $\C{U}$ is independent of $t$,
and we make the change of variable $t = \tau T$,
where $0 \le \tau \le 1$.
After the change of variables,  both the state and the
control are functions of $\tau$ rather than $t$.
The reformulated optimization problem is
\begin{equation}\label{tau}
\min C(\m{x}(1)) \quad \mbox{subject to} \quad \m{\dot{x}}(\tau) =
T \m{f}(\m{x}(\tau), \m{u}(\tau)), \quad \m{x}(0) = \m{x}_0, \quad
\m{u}(\tau) \in \C{U},
\end{equation}
where $\m{x} : [0, 1] \rightarrow \mathbb{R}^n$ is absolutely continuous and
$\m{u} : [0, 1] \rightarrow \mathbb{R}^m$ is essentially bounded.
In the reformulated problem, the free terminal time $T$ appears as a parameter
in the system dynamics.

For fixed $T$, the optimization problem over the control has the same
structure as that of the problem analyzed in Sections~\ref{dscase1} and
\ref{dscase2}.
Hence, the previously derived formula for the derivative of the objective
with respect to a switching point remains applicable.
If a gradient-based algorithm will be used to solve (\ref{tau}),
then we also need a formula for the derivative of the
objective with respect to $T$ when the switch points for the control
are fixed.
Since the switch points for the control are fixed throughout this section,
the objective value in (\ref{tau}) only depends on the choice of
the parameter $T$ in the dynamics.
Assuming that for some given $T$ there exists a solution $\m{x}$
to the dynamics in (\ref{tau}),
we let $C(T) := C(\m{x}(1))$ denote the objective value.
By the chain rule,
\begin{equation}\label{dT}
\frac{d C(T) }{dT} = \nabla C[\m{x}(1)] \frac{ d \m{x}(1)} {dT}.
\end{equation}
Similar to the approach in Section~\ref{dscase2}, our goal is to obtain
an expression for the right side of (\ref{dT}) that avoids the computation
of the derivative of the state with respect to $T$.
Let us first consider Case~1 where the control
has the form $\m{u}(\tau) = \g{\phi}_j(\m{x}(\tau), \tau)$
for all $\tau \in (s_j, s_{j+1})$, $0 \le j \le k$.
\smallskip

\begin{theorem} \label{Ttheorem}
Suppose that for $T = \bar{T}$,
$\m{x} = \m{\bar{x}}$ is an absolutely continuous solution of
\begin{equation}\label{IVPT}
\m{\dot{x}}(\tau) = T \m{F}(\m{x}(\tau), \tau), \quad \m{x}(0) = \m{x}_0, \quad
0 \le \tau \le 1,
\end{equation}
where $\m{F}$ is defined in $(\ref{IVP})$.
We assume that for some $\rho > 0$,
$\m{f}_j(\g{\chi}, \tau)$, $0 \le j \le k$, is continuous with respect to
$\g{\chi}$ and $\tau$ and Lipschitz continuous
with respect to $\g{\chi}$ on the tube
\[
\{ (\g{\chi}, \tau) :
\tau \in [s_j, s_{j+1}] \mbox{ and }
\g{\chi} \in \C{B}_\rho(\m{x}(\tau)) \}, \quad 0 \le j \le k.
\]
Then we have
\begin{equation}\label{dCdT}
\left. \frac{dC (T)}{dT}\right|_{T = \bar{T}} =
\int_0^1 H(\m{\bar{x}}(\tau), \m{p}(\tau), \tau) \; d\tau ,
\end{equation}
where $H(\m{x}, \m{p}, \tau) = H_j (\m{x}, \m{p}, \tau)$ when
$s_j \le \tau \le s_{j+1}$,
and the row vector $\m{p}: [0, 1] \rightarrow \mathbb{R}^n$
is the solution to the linear differential equation
\begin{equation}\label{p-caseT}
\m{\dot{p}}(\tau) = - \bar{T} \m{p}(\tau) \nabla_x \m{F}(\m{\bar{x}}(\tau), \tau),
\quad \tau \in [0, 1],
\quad
\m{p}(1) = \nabla C[\m{\bar{x}}(1)] .
\end{equation}
\end{theorem}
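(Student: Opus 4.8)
The plan is to mirror the perturbation argument used in the proof of Theorem~\ref{spa1}, exploiting the fact that here the perturbation is especially benign: the switch points are frozen, so only the scalar parameter $T$ multiplying a fixed vector field varies. Starting from the chain-rule identity (\ref{dT}), the objective derivative is $\nabla C[\m{\bar{x}}(1)]$ applied to the sensitivity of the terminal state $\m{x}(1)$ with respect to $T$. The goal is to evaluate this quantity \emph{without} ever forming the state sensitivity, by routing it through the adjoint $\m{p}$ of (\ref{p-caseT}).

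First I would compare the solution $\m{x}=\m{\bar{x}}$ for the parameter $\bar{T}$ with the solution $\m{y}$ for the parameter $\bar{T}+\Delta T$; both share the initial condition $\m{x}_0$, and Corollary~\ref{IC}, applied to the scaled dynamics $\bar{T}\m{F}$, gives $\|\m{y}(\tau)-\m{x}(\tau)\|=\C{O}(\Delta T)$ on $[0,1]$. I would then introduce the first-order variation $\m{z}$ solving the linear initial value problem
\[
\m{\dot{z}}(\tau) = \bar{T}\,\nabla_x\m{F}(\m{x}(\tau),\tau)\,\m{z}(\tau) + \Delta T\,\m{F}(\m{x}(\tau),\tau), \quad \m{z}(0)=\m{0},
\]
obtained by differentiating $\m{\dot{x}}=T\m{F}(\m{x},\tau)$ in $T$ and scaling by $\Delta T$. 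A Gronwall estimate via Lemmas~\ref{Lip} and \ref{gronwall} gives $\|\m{z}(\tau)\|=\C{O}(\Delta T)$, and the same $\g{\delta}(\tau):=\m{y}(\tau)-\m{x}(\tau)-\m{z}(\tau)$ argument as in Theorem~\ref{spa1}, namely Taylor-expanding $\m{F}(\m{y},\tau)-\m{F}(\m{x},\tau)$ with integral remainder and invoking the Lipschitz differentiability of $\m{F}$, yields $\|\g{\delta}(\tau)\|=\C{O}((\Delta T)^2)$ on $[0,1]$.

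The heart of the argument is the adjoint cancellation. Differentiating $\m{p}(\tau)\m{z}(\tau)$ and substituting the adjoint dynamics (\ref{p-caseT}) together with the equation for $\m{z}$, the two terms containing $\bar{T}\,\m{p}\,\nabla_x\m{F}\,\m{z}$ cancel, leaving $\frac{d}{d\tau}[\m{p}(\tau)\m{z}(\tau)] = \Delta T\,\m{p}(\tau)\m{F}(\m{x}(\tau),\tau)$. Integrating over $[0,1]$ and using $\m{z}(0)=\m{0}$ together with $\m{p}(1)=\nabla C[\m{\bar{x}}(1)]$ produces
\[
\nabla C[\m{\bar{x}}(1)]\,\m{z}(1) = \Delta T \int_0^1 \m{p}(\tau)\m{F}(\m{\bar{x}}(\tau),\tau)\, d\tau = \Delta T\int_0^1 H(\m{\bar{x}}(\tau),\m{p}(\tau),\tau)\, d\tau,
\]
since $\m{F}=\m{f}_j$ and $H=H_j=\m{p}\m{f}_j$ on each subinterval $(s_j,s_{j+1})$. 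Using the Lipschitz continuous differentiability of $C$, I would then write $C(\m{y}(1))-C(\m{x}(1))=\nabla C(\m{x}_\Delta)(\m{z}(1)+\g{\delta}(1))$ for some $\m{x}_\Delta\in[\m{y}(1),\m{x}(1)]$, and absorb the $\g{\delta}(1)$ term and the $[\nabla C(\m{x}_\Delta)-\nabla C(\m{\bar{x}}(1))]\m{z}(1)$ term into $\C{O}((\Delta T)^2)$ exactly as in Theorem~\ref{spa1}. This gives $C(\m{y}(1))-C(\m{x}(1))=\Delta T\int_0^1 H\, d\tau+\C{O}((\Delta T)^2)$; dividing by $\Delta T$ and letting $\Delta T\to 0$ yields (\ref{dCdT}).

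The main obstacle is the second-order remainder bound $\|\g{\delta}(\tau)\|=\C{O}((\Delta T)^2)$. As stated, the hypotheses only assert continuity and Lipschitz continuity of $\m{f}_j$ in $\g{\chi}$, yet both the adjoint (\ref{p-caseT}) and this remainder estimate require $\nabla_x\m{F}$ to exist and to be Lipschitz; so the genuinely delicate point is to carry out the $\g{\delta}$ Gronwall argument under the stronger differentiability actually needed, which is precisely the regularity invoked in the State Regularity property of Section~\ref{dscase1}. Everything else is a routine transcription of the Theorem~\ref{spa1} machinery, simplified by the fact that the perturbation here enters only through the scalar multiplier $T$ on a fixed vector field.
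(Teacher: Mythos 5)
Your proof is correct, but it follows a genuinely different route from the paper's. The paper never touches finite differences: it writes the integral identity $0 = \int_0^1 \m{p}(\tau)[T\m{F}(\m{x}(\tau),\tau) - \m{\dot{x}}(\tau)]\,d\tau$ for the solution $\m{x}$ associated with a general $T$, integrates by parts to expose the boundary term $\m{p}(1)\m{x}(1)$, and then differentiates this identity with respect to $T$ at $T=\bar{T}$; the costate equation (\ref{p-caseT}) makes the coefficient of the unknown sensitivity $d\m{x}(\tau)/dT$ vanish inside the integral, leaving exactly $\m{p}(1)\,d\m{x}(1)/dT = \int_0^1 H \, d\tau$, and the chain rule (\ref{dT}) finishes. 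That argument is three lines long, but it is formal: it presupposes that the state is differentiable with respect to the parameter $T$ and that differentiation under the integral sign is legitimate. Your finite-difference argument --- comparing the solutions for $\bar{T}$ and $\bar{T}+\Delta T$, introducing the forced linearized equation for $\m{z}$ with $\m{z}(0)=\m{0}$, proving $\|\m{y}-\m{x}-\m{z}\| = \C{O}((\Delta T)^2)$ by Gronwall, and using the adjoint cancellation to evaluate $\nabla C[\m{\bar{x}}(1)]\m{z}(1)$ --- actually establishes that $C(T)$ is differentiable rather than assuming the state sensitivity exists, at the price of the stronger State Regularity hypotheses (Lipschitz differentiability of $\m{F}$), which you correctly flag; note that the paper's own proof needs no less, since both (\ref{p-caseT}) and the vanishing-coefficient step already require $\nabla_x\m{F}$ to exist.

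One repair is needed in your opening step: Corollary \ref{IC} compares two solutions of the \emph{same} dynamics with \emph{different} initial conditions, whereas your $\m{x}$ and $\m{y}$ have the same initial condition and different dynamics ($\bar{T}\m{F}$ versus $(\bar{T}+\Delta T)\m{F}$), so it does not apply as stated. The estimate $\|\m{y}(\tau)-\m{x}(\tau)\| = \C{O}(\Delta T)$ you need instead follows directly from Lemma \ref{Lip} and Lemma \ref{gronwall} with $a = \bar{T}L$ and the nonzero forcing term $b = \Delta T\,\sup_\tau\|\m{F}(\m{y}(\tau),\tau)\|$, since $\m{\dot{y}}-\m{\dot{x}} = \bar{T}[\m{F}(\m{y},\tau)-\m{F}(\m{x},\tau)] + \Delta T\,\m{F}(\m{y},\tau)$ and $w(0)=0$. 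With that substitution the rest of your argument goes through unchanged.
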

\smallskip

\begin{proof}
If $\m{p}$ denotes the costate given by (\ref{p-caseT}),
and $\m{x}$ for $T$ near $\bar{T}$ denotes the solution of (\ref{IVPT}),
then we have the identity
\begin{eqnarray}
0 &=& \int_0^1
\m{p}(\tau) [T \m{F}(\m{x}(\tau), \tau) - \m{\dot{x}}(\tau) ] \; d\tau
\nonumber \\
&=& \m{p}(0)\m{x}_0 - \m{p}(1)
\m{x} (1) + \int_0^1
\Big[ T\m{p}(\tau)\m{F}(\m{x}(\tau), \tau)
+ \m{\dot{p}}(\tau) \m{x}(\tau) \Big] \; d\tau , \label{T}
\end{eqnarray}
where the second equation comes from an integration by parts.
Let us differentiate with respect to $T$.
Since $\m{p}$ in (\ref{p-caseT}) does not depend on $T$, the
derivative of $\m{p}(0)\m{x}_0$ with respect to $T$ is zero.
Hence, the derivative of (\ref{T}) with respect to $T$,
evaluated at $T = \bar{T}$, yields
\[
\m{p}(1) \left. \frac{d \m{x}(1)}{dT}\right|_{T = \bar{T}} =
\int_0^1 \Big\{ \m{p}(\tau)\m{F}(\m{\bar{x}}(\tau), \tau)
+ \big[ \bar{T}\nabla_x \m{F}(\m{\bar{x}}(\tau), \tau)
+ \m{\dot{p}}(\tau) \big]
\left. \frac{d \m{x}(\tau)}{dT}\right|_{T = \bar{T}} \Big\}
d\tau .
\]
Substituting for $\m{p}$ from (\ref{p-caseT}), the factor
multiplying $d\m{x}(\tau)/dT$ is zero.
It follows from (\ref{dT}) that
\[
\left. \frac{dC (T)}{dT}\right|_{T = \bar{T}} =
\nabla C[\m{\bar{x}}(1)] \left. \frac{ d \m{x}(1)} {dT} \right|_{T=\bar{T}} =
\int_0^1 H(\m{\bar{x}}(\tau), \m{p}(\tau), \tau) \; d\tau ,
\]
which completes the proof.
\end{proof}
\smallskip

\begin{remark}Note that the Hamiltonian in $(\ref{dCdT})$ does not contain the
terminal time, while the Hamiltonian associated with $(\ref{IVPT})$
and the objective derivative with respect to a switch point does
contain the terminal time.
It follows from $(\ref{dCdT})$ that the integral of the Hamiltonian vanishes
along an optimal solution to the control problem $(\ref{CP})$.
Hence, due to the constancy of the Hamiltonian along an optimal solution,
$(\ref{dCdT})$ implies that the Hamiltonian vanishes along an optimal solution,
a classic first-order optimality condition for free terminal time
control problems.
\end{remark}
\smallskip

Now consider Case~2 where the control
has the form $\m{u}(\tau) = \g{\phi}_j(\m{x}(\tau), \m{p}(\tau), \tau)$
for all $\tau \in (s_j, s_{j+1})$, $0 \le j \le k$.
The generalized state $(\m{x}, \m{p})$ satisfies the coupled system
\begin{equation} \label{coupledT}
\m{\dot{\m{x}}}(t) = T\m{F}(\m{x}(t), \m{p}(t), t), \quad
\m{\dot{\m{p}}}(t) = -T\m{p}(t)\m{F}_x(\m{x}(t), \m{p}(t), t),
\end{equation}
where $(\m{x}(0), \m{p}(0)) = (\m{x}_0, \m{p}_0)$ and the terminal time
$T$ is a parameter in the equations.
For fixed $T$ and a fixed choice of the switching times $\m{s}$, the
derivative of the objective with respect to $\m{p}_0$ is given by
the formula (\ref{partial_p0}).
Now for a fixed choice of both $\m{s}$ and $\m{p}_0$, say
$\m{p}_0 = \m{\bar{p}}_0$,
our goal is to evaluate the objective derivative with respect to $T$
evaluated at some given terminal time $T = \bar{T}$,
assuming a solution
$(\m{\bar{x}}, \m{\bar{p}})$ to (\ref{coupledT}) exists for
$T = \bar{T}$ and $\m{p}_0 = \m{\bar{p}}_0$.

The analogue of (\ref{T}) is a slightly modified version of
(\ref{int}) where the integration limit $T$ is replaced by 1, while
$T$ appears as a parameter next to the dynamics:
\begin{equation}\label{GT}
0 = \int_0^1 \m{y}_1(t) [T\m{F}(\m{x}(t), \m{p}(t), t) - \m{\dot{x}}(t)]
- [T\m{p}(t)\m{F}_x(\m{x}(t), \m{p}(t), t) + \m{\dot{p}}(t)] \m{y}_2\tr(t)
\; dt,
\end{equation}
where $(\m{x}, \m{p})$ is the solution to (\ref{coupledT}) corresponding
to a general $T$, but with the initial condition $\m{p}_0 = \m{\bar{p}}_0$.
The generalized costate $\m{y}$ in (\ref{GT}) is the solution to
\begin{eqnarray*}
\dot{\m{y}}_1 (t) &=& -\bar{T}\nabla_x \C{H}_j(\m{\bar{x}}(t), \m{\bar{p}}(t), \m{y}(t), t),
\quad
\m{y}_1(T) = \nabla C(\m{\bar{x}}(T)) \\
\dot{\m{y}}_2 (t) &=& -\bar{T}\nabla_p \C{H}_j(\m{\bar{x}}(t), \m{\bar{p}}(t), \m{y}(t), t),
\quad \m{y}_2(T) = \m{0}
\end{eqnarray*}
on $(s_{j}, s_{j+1})$ for $j = k,~ k-1,~ \ldots, ~0$.
Proceeding as in the proof of Theorem~\ref{Ttheorem},
we integrate by parts in (\ref{GT}), differentiate with respect to $T$,
and evaluate at $T = \bar{T}$ to obtain the relation
\[
\left. \frac{dC (T)}{dT}\right|_{T = \bar{T}} =
\nabla C[\m{\bar{x}}(1)] \left. \frac{ d \m{x}(1)} {dT} \right|_{T=\bar{T}} =
\int_0^1 \C{H}(\m{\bar{x}}(\tau), \m{\bar{p}}(\tau), \m{y}(\tau), \tau) \;
d\tau ,
\]
where $\C{H}(\m{x}, \m{p}, \m{y}, \tau) =$
$\C{H}_j (\m{x}, \m{p}, \m{y}, \tau)$ when
$s_j \le \tau \le s_{j+1}$, $0 \le j \le k$.
\section{Starting Guess}
\label{starting}
This section discusses how to generate a starting guess for the
Switch Point Algorithm.
Detailed illustrations of these techniques are given in \cite{Atkins20}.
If the optimal control is bang-bang without singular intervals, then the
optimization problem could be discretized by Euler's method, and the
location of the switching point can often be estimated with a few iterations
of a gradient or a conjugate gradient method.
On the other hand, when a singular interval is present, wild oscillations
in the control can occur and the problem becomes more difficult.
An effective way to approximate the optimal control in the singular setting
is to incorporate total variation (TV) regularization in the objective.
TV regularization has been very effective in image restoration since it
preserves sharp edges; for singular optimal control problems, it helps to
remove the wild oscillations in the control, and better exposes the
switch points.

We consider the Euler discretization of (\ref{CP}) with
$\rho$-amplified TV regularization:
\begin{eqnarray}
&\min C(\m{x}_N) + \rho \sum_{i=1}^m \sum_{j = 1}^{N-1} |u_{ij} - u_{i,j-1}|&
\label{D} \\
&\mbox{subject to} \;\; \m{x}_{j+1} = \m{x}_j +
h \m{f}(\m{x}_j, \m{u}_j), ~ \m{u}_j \in \C{U}(t_j),& \nonumber
\end{eqnarray}
where $0 \le j \le N-1$, $h = T/N$, $t_j = jh$, and
$N$ is the number of mesh intervals.
The parameter $\rho$ controls the strength of the TV regularization term,
and as $\rho$ increases, the oscillations in $u$ should decrease.
The nonsmooth problem (\ref{D}) is equivalent to the smooth
optimization problem
\begin{eqnarray}
&\min C(\m{x}_N) + \rho \sum_{i=1}^m \sum_{j = 1}^{N-2}
 v_{ij} + w_{ij} .&
\label{SD} \\
&\mbox{s.t.} \; \m{x}_{j+1} = \m{x}_j +
h \m{f}(\m{x}_j, \m{u}_j),
~ \m{u}_j \in \C{U}(t_j),
~ \m{u}_{l+1} - \m{u}_l = \m{v}_l - \m{w}_l,
~ \m{v}_l \ge \m{0}, ~ \m{w}_l \ge \m{0},
& \nonumber
\end{eqnarray}
where $0 \le j \le N-1$ and $0 \le l \le N-2$.
The equivalence between (\ref{D}) and (\ref{SD}) is due to the well-known
property in optimization that
\[
|u| = \min \{ v + w : u = v - w, ~v \ge 0, ~w \ge 0 \}.
\]
The smooth TV-regularized problem (\ref{SD}) can be solved quickly by
the Polyhedral Active Set Algorithm \cite{HagerZhang16} due to the
sparsity of the linear constraints.

Figure~\ref{catmixfig} shows how the optimal control of (\ref{D})
for the catalyst mixing problem of the next section depends on $\rho$.
When $\rho = 0$, the control oscillates wildly, when $\rho = 10^{-5}$
many of the oscillations are gone, and when $\rho = 10^{-3}$ the computed
solution provides a good fit to the exact solution, and the switching points
for the discrete problem are roughly within the mesh spacing ($N = 100$)
of the exact switching points.
In some problems with highly oscillatory solutions,
convergence of TV regularized optimal values is established in
\cite{Caponigro18}.

When solving (\ref{D}), we also obtain an estimate for the initial costate
$\m{\bar{p}}(0)$ associated with a solution of (\ref{CP}).
In particular, the KKT conditions at a solution of (\ref{D}) yield
the following equation for the multiplier $\m{p}_j$ associated with
the constraint $\m{x}_j + h\m{f}(\m{x}_j, \m{u}_j) - \m{x}_{j+1} = \m{0}$:
\[
\m{p}_{j-1} = \m{p}_j (\m{I} + h\nabla_x \m{f}(\m{x}_j , \m{u}_j)), \quad
1 \le j \le N-1, \quad \m{p}_{N-1} = \nabla C(\m{x}_N) ,
\]
where $\m{p}_0$ is an approximation to $\m{\bar{p}}(0)$ and $\m{I}$ is
the $n$ by $n$ identity matrix.
\begin{figure}
\begin{nscenter}
\includegraphics[scale=.27]{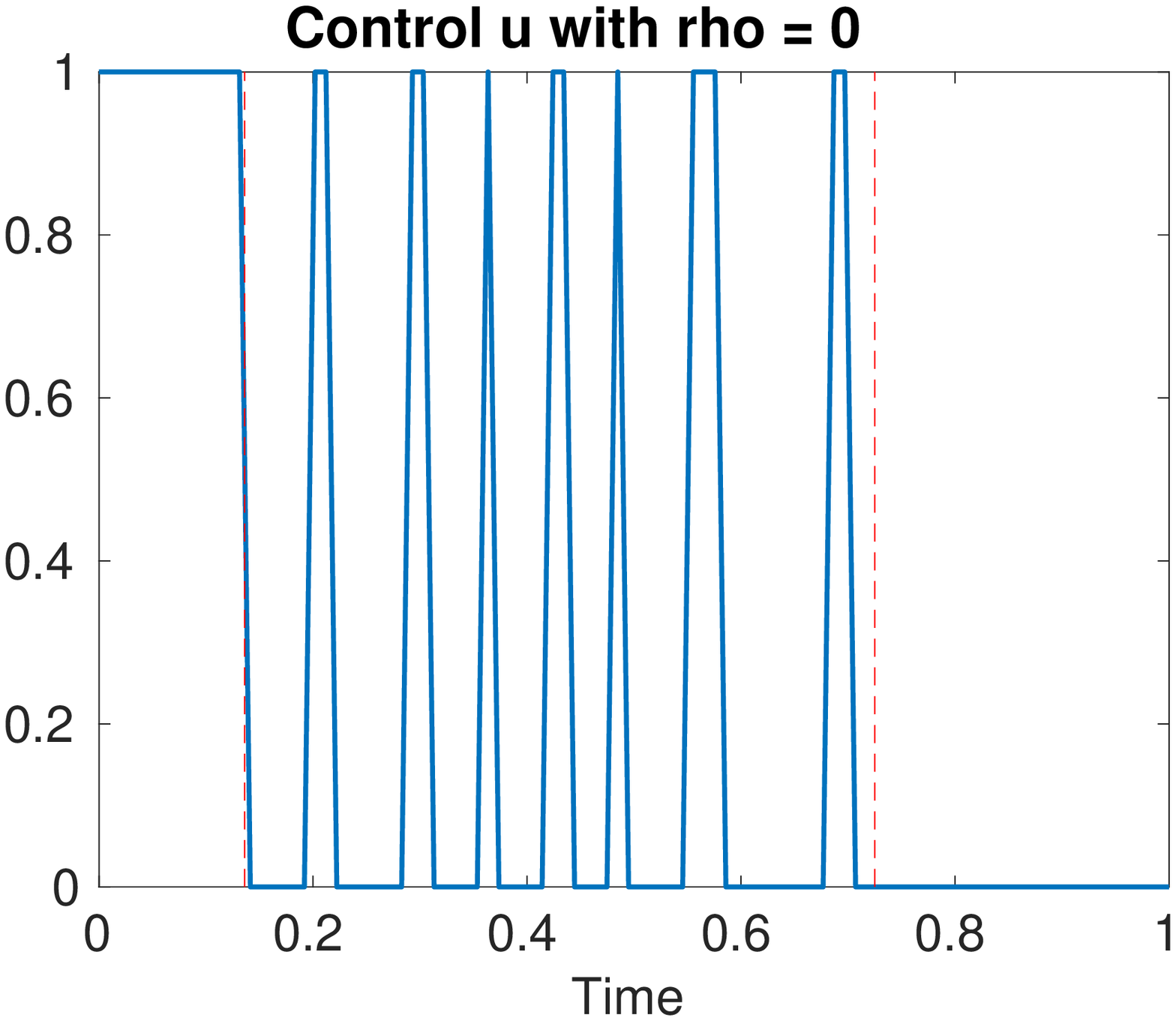}
\includegraphics[scale=.27]{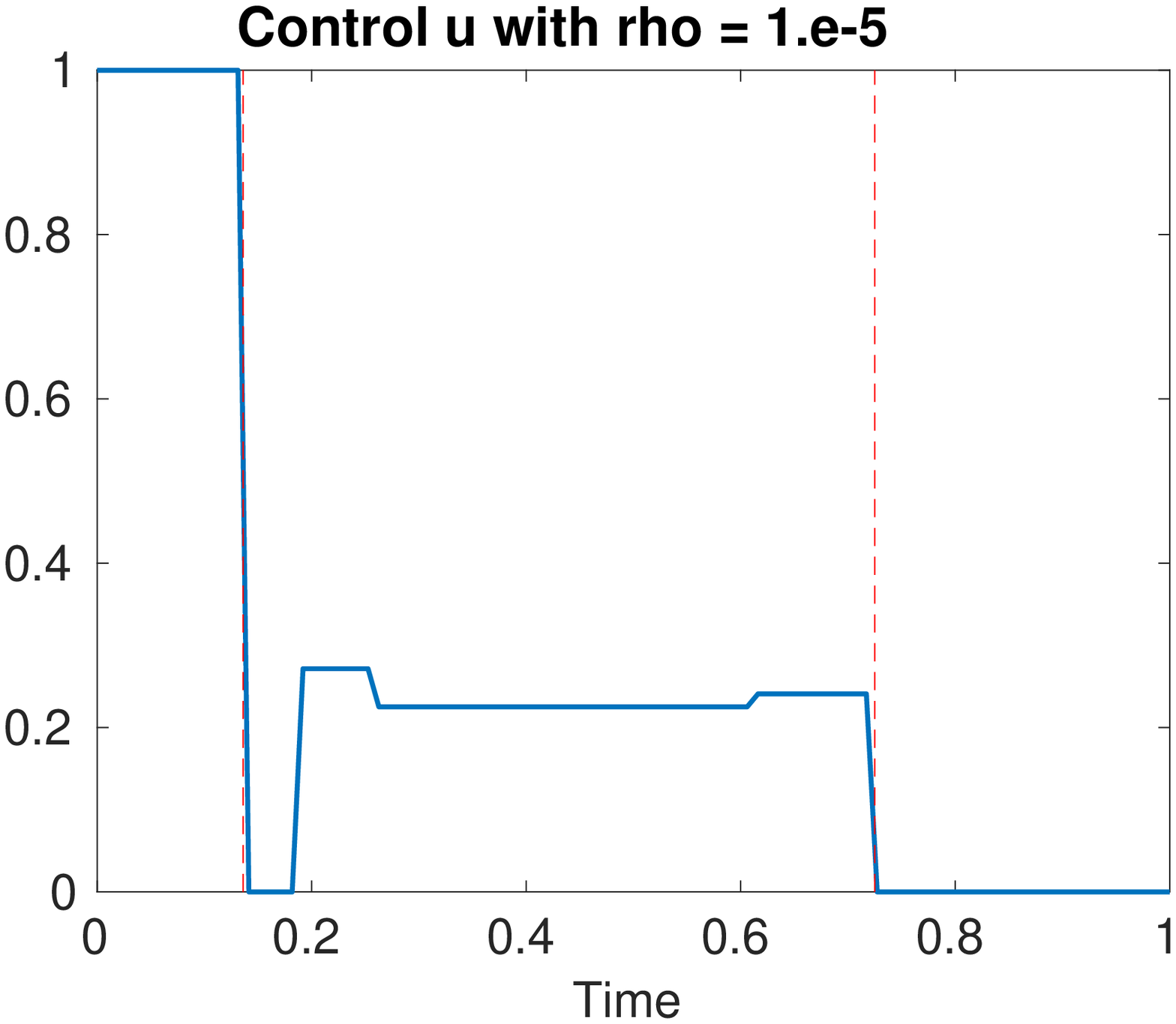}
\includegraphics[scale=.27]{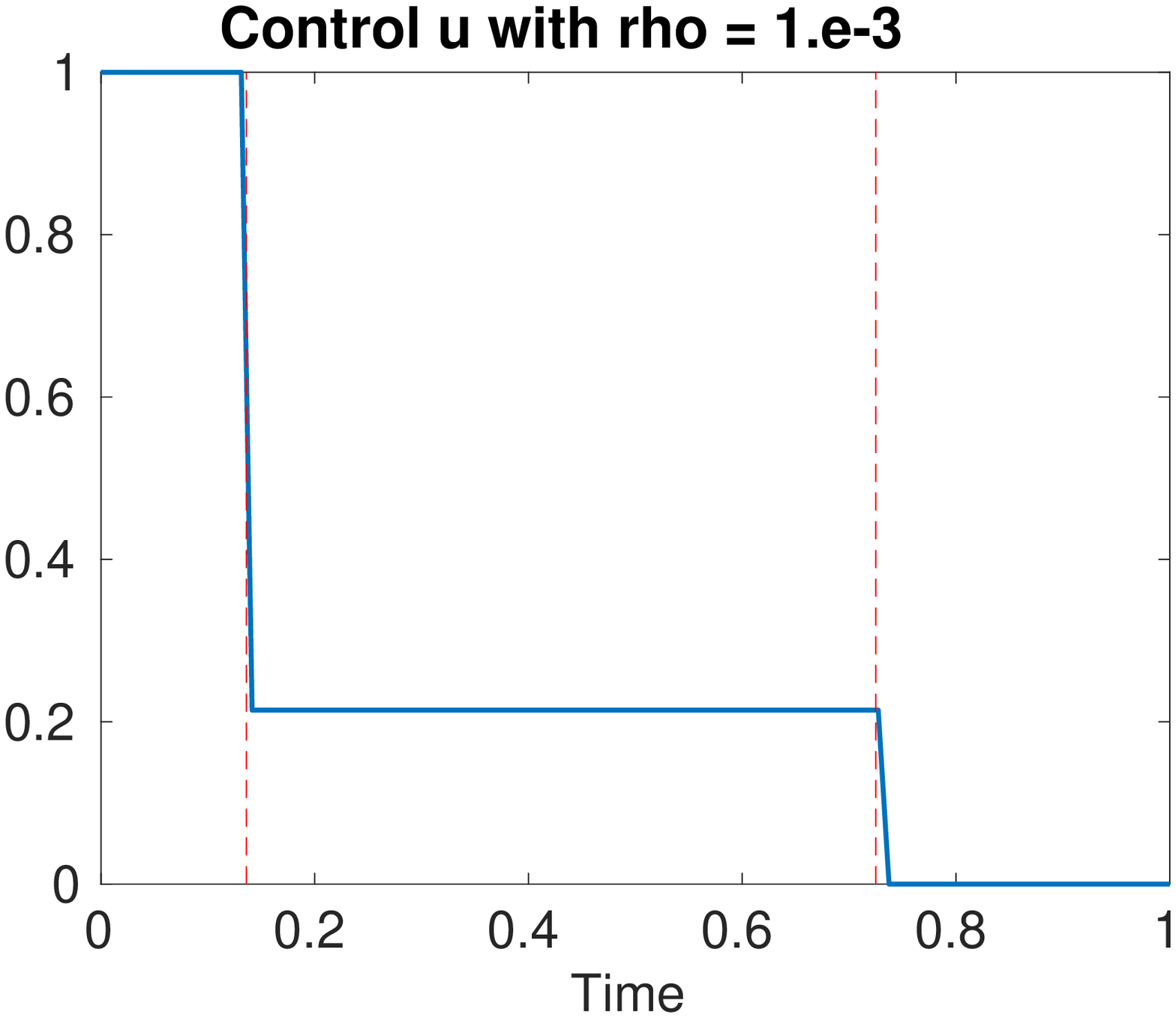}
\caption{Numerical solutions to $(\ref{D})$ for the catalyst mixing problem
and three different
choices for the regularization: $\rho = 0, 10^{-5}, 10^{-3}$.
Exact switching points appear as dashed lines.
\label{catmixfig}}
\end{nscenter}
\vspace*{.1in}
\end{figure}

\section{Numerical Studies}
\label{examples}
We consider four classic singular control problems from the literature
to examine the performance of the Switch Point Algorithm relative to
that of previously reported algorithms.
The test problems are the following:
\begin{enumerate}
\item
The catalyst mixing problem, first proposed by Gunn and Thomas
\cite{Gunn1965MassTA}, and later solved by Jackson \cite{Jackson68}, with
additional analytic formulas given by Li \cite{Li2015CommentsO}.
\item
A problem posed by Jacobson, Gershwin, and Lele  \cite{Jacobson70}.
\item
A problem posed by Bressan \cite{Bressan11}.
\item
A problem posed by Goddard \cite{Goddard1919}.
\end{enumerate}
All the test problems have known solutions.
The Switch Point Algorithm was implemented in MATLAB; the gradients
of the objective were evaluated using the formulas given in the paper.
The differential equations were integrated using MATLAB's ODE45 code,
which implements the Dormand-Prince \cite{DormandPrince80} embedded
explicit Runge-Kutta (4,5) scheme
(both 4th and 5th order accuracy with error estimation).
The optimization was performed using the
Polyhedral Active Set Algorithm (PASA) \cite{HagerZhang16}
(available from Hager's web page).
The experiments were performed on a
Dell T7610 workstation with 3.40~GHz processors.
We did not implement other algorithms;
we simply compare to previously reported results in the literature.

\subsection{Catalyst mixing problem}
The optimal control problem is as follows:
\begin{equation}\label{catdy}
    \begin{split}
        \text{min}~ & a(T) + b(T) - 1\\
        \text{s.t.}~ & \dot{a}(t) = -u(t)(k_1 a(t) - k_2 b(t) ) \\
        & \dot{b}(t) = u(t) (k_1 a(t) - k_2 b(t))- ( 1 - u(t))k_3 b(t)\\
        & a(0) = 1,~b(0) = 0, ~ 0 \le u(t) \le 1
    \end{split}
\end{equation}
Here $a$ and $b$ are the mole fractions of substances $A$ and $B$ which
are catalyzed by fraction $u$ to produce $C$ in a reactor of length $T$.
The parameters $k_i$, $i = 1,2,3$, are given constants, and the objective
corresponds to the maximization of $C$.
Symbolically, the relation is denoted $A \rightleftharpoons  B \rightarrow C$.
As seen in Figure~\ref{catmixfig}, the pattern of the optimal control
is bang, singular, and off.

The Hamiltonian for this problem is
\[
H(a, b, u, p_1, p_2) =
\big[ (p_2-p_1)(k_1a - k_2b) + p_2k_3b] u - p_2k_3b .
\]
The switching function, which corresponds to the coefficient of $u$, is
given by
\begin{equation}\label{hamfcn}
\C{S} (t) = 
(p_2(t)-p_1(t))(k_1a(t) - k_2b(t)) + k_3 p_2(t)b(t) ,
\end{equation}
where $\m{p}$ is the solution of the system
\[
\begin{array}{llll}
\dot{p}_1(t) &=& - (p_2(t)  - p_1(t))k_1 u(t), & p_1 (T) = 1,\\[.05in]
\dot{p}_2(t) &=& (p_2(t)  - p_1(t)) k_2 u(t) + k_3(1-u(t))p_2(t) ,
& p_2 (T) = 1. \end{array}
\]

When $\C{S}(t) < 0$, $u (t) = 1$; when $\C{S}(t) > 0$, $u(t) = 0$; and
when $\C{S}(t) = 0$, $u$ is singular.
In the singular region, both $\C{S}$ and its derivatives vanish.
Differentiating the switch function, we have
\begin{eqnarray*}
\dot{\C{S}}(t) &=& k_3 \big[ k_1 a(t) p_2(t) - k_2 b(t) p_1(t) \big] ,\\
\Ddot{\C{S}}(t)&=& k_3 \Big\{ u(t) p_1(t) [k_2 b(t) (k_2 - k_3 - k_1)
- 2k_1 k_2 a(t)] + \\
&& \quad \;\; u(t) p_2(t) [k_1 a(t) (k_2 - k_3 - k_1)+ 2k_1 k_2 b(t)] + \\
&& \quad \;\; k_3 \big[ k_1 p_2 (t) a(t) + k_2 p_1(t) b(t)\big] \Big\} .
\end{eqnarray*}
Although the control is missing from $\dot{\C{S}}$, it appears in
$\Ddot{\C{S}}$.
Hence, we use the equation $\Ddot{\C{S}}(t) = 0$ to solve for the
control in the singular region.
In the following equation, the ``$(t)$'' arguments for the state and costate
are omitted:
\begin{equation}\label{u1}
u_{\rm {sing}} =
\frac{-k_3 (k_1 ap_2 + k_2 b p_1)}
{p_1 [k_2 b(k_2 - k_3 - k_1) - 2k_1 k_2 a]
+ p_2 [k_1 a (k_2 - k_3 - k_1) + 2k_1 k_2 b]}
\end{equation}

With further analysis, it can be shown that the singular control is constant.
Jackson \cite{Jackson68} derives the following expression
for the singular control, where the numeric value given below corresponds
to the parameter values $k_1 = k_3 = 1$ and $k_2 = 10$ which are used throughout
the literature:
\begin{equation}\label{u2}
u_{\mbox{\footnotesize {sing}}} = \frac{\alpha (1+\alpha)}{\beta + (1+\alpha)^2}
\approx 0.227142082708498,
\end{equation}
where $\alpha = \sqrt{k_3/k_2}$ and $\beta = k_1/k_2$.

There are two switch points for the catalyst mixing problem.
Analytic formulas for the switching times, presented by Jackson, are
\begin{eqnarray*}
s_1 &=&  \left( \frac{1}{k_2(1 + \beta)} \right)
~ \log \left( \frac{1+ \alpha + \beta}{\alpha} \right) \approx
0.136299034594555, \\
s_2 &=& T - k_3^{-1} \log (1 + \alpha) \approx T - 0.274769892408345.
\end{eqnarray*}
The optimal control is
\[
u(t) = \left\{ \begin{array}{ll}
1 & \mbox{if } 0 \le t < s_1, \\
u_{\mbox{\footnotesize{sing}}} & \mbox{if } s_1 \le t < s_2, \\
0 & \mbox{if } s_2 \le t \le T. \end{array} \right.
\]

An analytic formula is given for the optimal objective value in 
\cite{Li2015CommentsO};
the numeric values for the optimal objectives are
\[
\begin{array}{ll}
-0.048055685860877 & (T = 1), \\
-0.191814356325161 & (T = 4), \\
-0.477712020050041 & (T = 12).
\end{array}
\]

We solve the test problem using both the Case~1 representation
in (\ref{u2}), where the exact form of the singular control is exploited,
and the Case~2 representation for the
singular control given in (\ref{u1}) where the algorithm computes the
control in the singular region.
The versions of the Switch Point Algorithm corresponding to these
two representation of the singular control are denoted
SPA1 (Case~1) and SPA2 (Case~2).

In Table \ref{data1}
we compare the performance of SPA1 and SPA2 to results from the literature
for the problem (\ref{catdy}) with reactor lengths $T = 1$, 4, and 12.
The accuracy tolerances used for PASA and ODE45 were $10^{-8}$.
The starting guesses for the switching times and the initial costate
were accurate
to roughly one significant digit.
For example, with $T = 1$ the starting guesses were
$s_1 = 0.1$, $s_2 = 0.7$, $p_1(0) = 0.9$, and $p_2(0) = 0.8$.
As seen in Table~\ref{data1}, the accuracy was improved from the initial one
significant digit to between 9 and 11 significant digits when using
$10^{-8}$ tolerances for both PASA and ODE45.  
\begin{table}[ptbh]
\begin{center}%
\begin{tabular} [c]
{|c|c|r|c|c|c|}\hline
Method & $T$ &CPU (s)& $C$ Error& $s_1$ Error & $s_2$ Error \\ \hline
SPA1 & 1 & $ 0.10$& 1.6$\times 10^{-10}$ & 3.1$\times 10^{-09}$&1.2$\times 10^{-11}$\\
SPA2 & 1 & $ 0.30$& 9.6$\times 10^{-12}$ & 1.9$\times 10^{-10}$&6.2$\times 10^{-11}$\\
\cite{Banga1998Stochastic}& 1 & $40-100$& 5.7$\times 10^{-06}$ & $-$ & $-$\\
\cite{Bell2000}&1 & $9.74$ & 2.4$\times 10^{-05}$& $-$ & $-$\\
\cite{Dadebo1995Dynamic} & 1 & $-\;\;$ & 5.7$\times 10^{-06}$&$-$&$-$ \\
\cite{Foroozandeh2018}& 1 & $-\;\;$ & 6.3$\times 10^{-09}$ & 3.0$\times 10^{-09}$ & 1.1$\times 10^{-12}$  \\
\cite{Irizarry2005} & 1 & $-\;\;$ & 5.7$\times 10^{-06}$ & $-$ & $-$ \\
\cite{Li2015CommentsO}& 1 & $-\;\;$ & 5.9$\times 10^{-09}$ & 1.5$\times 10^{-08}$ & 6.8$\times 10^{-08}$\\
\cite{Liu2013SolutionOC}& 1 & $17.90$& 1.4$\times 10^{-08}$ &$-$&$-$  \\
IDE \cite{Lobato2011} & 1 & $-\;\;$ & 2.3$\times 10^{-05}$& 8.3$\times 10^{-03}$& 7.8$\times 10^{-03}$\\
\cite{TanartkitLorenz}& 1 & 90.00 & 1.4$\times 10^{-05}$ &$-$&$-$ \\
\cite{Vassiliadis1993} & 1 & 38.10 & 1.4$\times 10^{-08}$ & 2.1$\times 10^{-05}$&4.9$\times 10^{-04}$  \\
SPA1 & 4 & 0.12& 1.1$\times 10^{-10}$ & 4.5$\times 10^{-09}$&1.5$\times 10^{-09}$\\
SPA2 & 4 & 0.68& 1.4$\times 10^{-10}$ & 2.4$\times 10^{-10}$& 1.5$\times 10^{-11}$\\
\cite{Biegler20} & 4 & 0.90 & $-$ & 4.6$\times 10^{-09}$
& 7.6$\times 10^{-09}$ \\
\cite{Biegler19b}& 4 & 0.33 &$-$& 3.3$\times 10^{-03}$& 4.8$\times 10^{-03}$ \\
\cite{Biegler16}& 4 & 1.35 & $-$ & 8.4$\times 10^{-08} $& 3.2$\times 10^{-07}$\\
\cite{Biegler19a} & 4 & 0.90 & $-$ & 5.0$\times 10^{-07}$& 8.3$\times 10^{-06} $\\
SPA1 & 12& 0.19& 1.7$\times 10^{-10}$ & 3.7$\times 10^{-10} $& 4.4$\times 10^{-08}$\\
SPA2 & 12& 0.98& 2.0$\times 10^{-11}$ & 1.6$\times 10^{-09} $& 3.6$\times 10^{-14}$\\
\cite{Dadebo1995Dynamic} & 12 &595.00& 7.7$\times 10^{-04}$ &$-$&$-$ \\
\cite{Li2015CommentsO} & 12 & $-\;\;$ & 5.0$\times 10^{-11}$ & 1.1$\times 10^{-07} $& 4.0$\times 10^{-07} $ \\
\cite{Liu2013SolutionOC} & 12 & 17.79& 7.7$\times 10^{-04} $ &$-$&$-$  \\
\cite{Pham2008DynamicOO} & 12 & $-\;\;$ & 2.7$\times 10^{-04}$ &$-$&$-$ \\
\cite{Rajesh2001DynamicOO} & 12 &0.24& 1.6$\times 10^{-03}$ &$-$&$-$ \\
\hline
\end{tabular}
\caption{Performance and absolute errors for the catalyst mixing problem.
\label{data1}}%
\end{center}
\end{table}

Note that many of the algorithms in Table~\ref{data1} exploit the known form
of the singular control; the computing times for SPA1 where the known form
of the singular control is exploited are significantly smaller than the
time for SPA2 where the singular control is computed.
It is difficult to compare the computing times in Table~\ref{data1} since the
computers used to solve the test problem vary widely in speed.
Moreover, it should be possible to significantly lower the computing time for
the SPA by developing an implementation in a compiled
language instead of MATLAB.
And with an ODE integrator tailored to the structure of the control
problem, the computing time could be reduced further.
Observe that the accuracy of the solution computed by
SPA was relatively high when using a modest
$10^{-8}$ accuracy tolerance for both the optimizer and the ODE integrator.

\subsection{Jacobson's Problem \cite{Jacobson70}} 
The test problem is given by
\[
\begin{split}
  \min~ & \frac{1}{2}\int_0^5 x_1^2 (t)+x_2^2 (t)\; dt\\
  \mbox{s.t.} ~&\dot{x}_1(t)=x_2(t), \quad \dot{x}_2(t)=u(t), \\
&x_1 (0) = 0,~ x_2(0) = 1, \quad -1 \leq u(t) \leq 1.
\end{split}
\]
This problem as well as the next can be reformulated in form
(\ref{CP}) by adding a new state variable whose dynamics is the
integrand of the objective.
After this reformulation, one finds that the costate
associated with the new variable is $p_0 := 1$, so the Hamiltonian
simplifies to
\[
H(\m{x}, u, \m{p}) =
\frac{1}{2} (x_1^2 + x_2^2) + p_1 x_2 + p_2 u .
\]
The switching function is $\C{S}(t) = p_2(t)$, where $\m{p}$ is the
solution of the system
\[
\begin{array}{ll}
\dot{p}_1(t) = -x_1(t),          & p_1(5) = 0, \\
\dot{p}_2(t) = -p_1(t) - x_2(t), & p_2(5) = 0. \\
\end{array}
\]
The first two derivatives of the switching function are
$\dot{\C{S}}(t) = -p_1(t) - x_2 (t)$ and $\ddot{\C{S}}(t) = x_1(t) - u(t)$.
In the singular region, $\ddot{\C{S}} = 0$, which implies that
$u(t) = x_1 (t)$.
The optimal control has one switching point whose first few digits are
\[
s_1 \approx 1.41376408763006415924,
\]
which is the root of the equation
\[
1-s^2/2 = e^{2s-10} (-1 + 2s - s^2/2) .
\]

The optimal control is
\[
u(t) = 
\begin{cases} 
-1 &  \text{if} ~ 0\leq t< s_1, \\
x_1(t) &  \text{if} ~ s_1 < t \le 5,
\end{cases}
\]
where $[s_1, 5] $ is the singular interval. 

Since this problem has only one switch point, we will compute it by
using the secant method to find a zero of the objective derivative
with respect to $s_1$.
The left panel of Figure~\ref{DC} plots the derivative of the objective function
with respect to the switch point.
Notice that the derivative vanishes twice, once on the interval
$[1.41, 1.42]$ (corresponding to the optimal control) and once on the
interval $[1.45,1.46]$ corresponding to a local maximum.
Thus to compute the correct switching point using the secant method,
we should start to the left of the local maximum in Figure~\ref{DC}.
The results in Table~\ref{data2} correspond to the starting iterates
1.42 and 1.41, which bracket the switch point of the optimal control.
In 5 iterations of the secant method, we obtain roughly 11 digit accuracy.
The solution time of SPA is basically the time of MATLAB's ODE45 integrator.
In Figure~\ref{DC} it appears that by choosing the switch point to the
right of the local maximum, it may be possible to achieve a smaller value
for the cost function.
However, for the switch point corresponding to the local
maximum in Figure~\ref{DC}, the singular control $u(t) = x_1(t)$ is
$-1$ at $t = 5$, and as the switching point moves further to the right,
the singular control becomes infeasible with $u(5) < -1$.
\begin{table}[h]
\begin{center}%
\begin{tabular} [c] {|c|r|l|}\hline
Method & CPU (s)& $s_1$ Error \\ \hline
SPA1 & 0.15 & 5.0$\times 10^{-11}$\\
\cite{Biegler20} & 2.34 & 6.8$\times 10^{-08}$ \\
\cite{Biegler19b}& 0.39 & 3.8$\times 10^{-03}$ \\
\cite{Biegler16}& 132.03 & 7.4$\times 10^{-06}$\\
\cite{Biegler19a} & 0.33 & 3.2$\times 10^{-07}$\\
\hline
\end{tabular}
\end{center}
\caption{Performance and absolute errors for Jacobson's problem
\label{data2}}
\end{table}
 
\begin{figure}
\begin{nscenter}
\includegraphics[scale=.31]{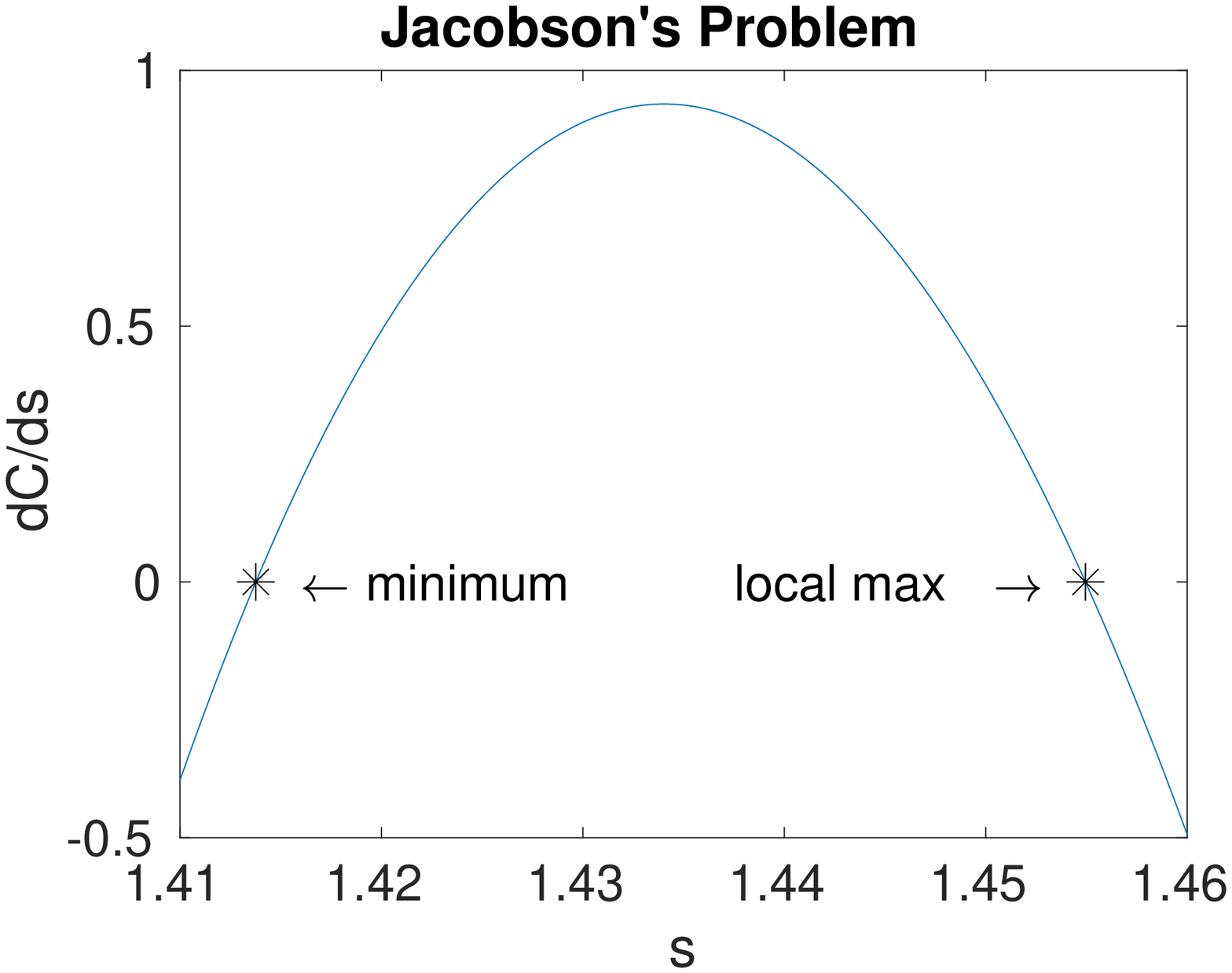}
\includegraphics[scale=.31]{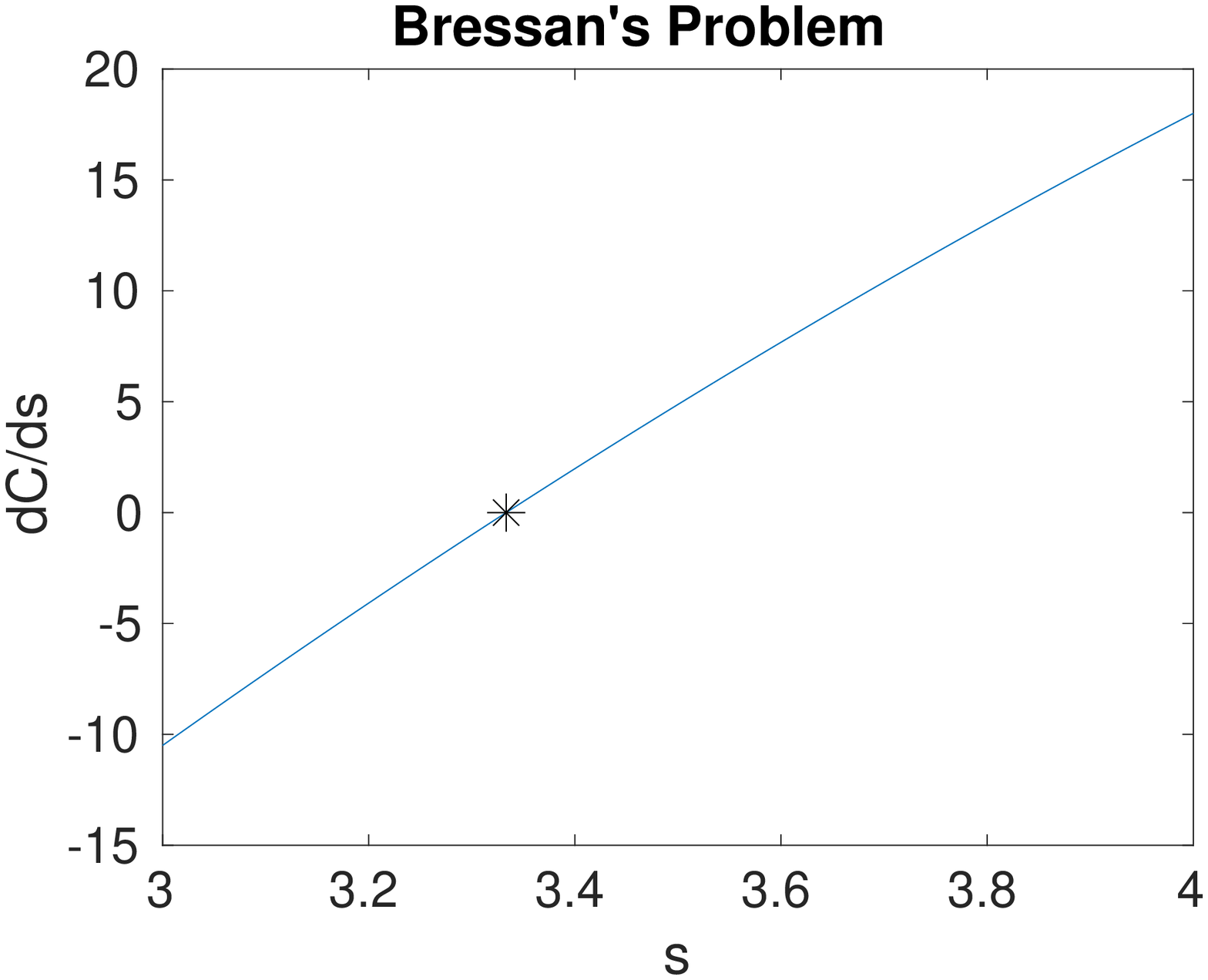}
\caption{Derivative of the objective versus switch point
$($zeros marked by stars$)$.
\label{DC}}%
\end{nscenter}
\end{figure}

\subsection{Bressan's Problem \cite{Bressan11}}
The test problem is
\[
\begin{split}
\min~ &\int_0^{T} x_1^2(t) - x_2 (t) ~ dt \\
\mbox{s.t.} ~&\dot{x}_1(t)=u(t), \quad \dot{x}_2(t)=-x_1 (t),\\
&x_1 (0) = 0,~ x_2(0) = 0, ~ x_3 (0) = 0, \quad -1 \leq u(t) \leq 1 ,
\end{split}
\]
where $T = 10$ in \cite{Biegler19b, Biegler16}.
The Hamiltonian is 
\[
H(\m{x}, u, \m{p}) = x_1^2 - x_2 + p_1 u - p_2 x_1 .
\]
The switching function is $\C{S}(t) = p_1(t)$ where $\m{p}$ is the
solution of the system
\[
\begin{array}{ll}
\dot{p}_1(t) = p_2(t)  - 2x_1(t), & p_1(T) = 0, \\
\dot{p}_2(t) = 1,                 & p_2(T) = 0. \\
\end{array}
\]
The first two derivatives of the switching function are
$\dot{\C{S}}(t) = p_2(t)  - 2x_1(t)$ and $\ddot{\C{S}}(t) = 1 - 2u(t)$.
In the singular region, $\ddot{\C{S}} = 0$, which implies that
$u(t) = 1/2$.
It can be shown \cite{Bressan11} that there is one switching point $s_1 = T/3$.
The optimal control is
\[
u(t) = 
\begin{cases} 
   -1 &  \text{if} ~ 0 \le t< s_1, \\
  1/2 &  \text{if} ~ s_1 < t \le T.
\end{cases}
\]

Unlike Jacobson's problem,
the plot of the objective derivative versus the switch point
(right panel of Figure~\ref{DC}) is nearly linear in a wide interval
around the switch point 10/3 when $T = 10$.
Starting from the initial iterates 3.0 and 4.0, the secant method converges
to 15 digit accuracy in 5 iterations.
\begin{table}[h]
\begin{center}%
\begin{tabular} [c] {|c|c|l|}\hline
Method & CPU (s)& $s_1$ Error \\ \hline
SPA1 & 0.09 & 1.8$\times 10^{-15}$\\
\cite{Biegler19b}& $0.29  $&3.3$\times 10^{-03}$ \\
\cite{Biegler16} & 24.80 & 5.9$\times 10^{-07}$\\
\hline
\end{tabular}
\end{center}
\caption{Performance and absolute errors for Bressan's problem.}
\label{data3}
\end{table}

\subsection{Goddard's Problem \cite{Goddard1919}}
A classic problem with a free terminal time is Goddard's rocket problem.
It is difficult to compare to other algorithms in the literature
since there are many variations of Goddard's problem
\cite{Betts2010, BonnansMartinon2008, Bryson75, Foroozandeh17a,
Foroozandeh17b, Martinon09, Maurer1976, Rao:2010:ACM, Vossen2010}, and different
algorithms are tested using different versions of the problem.
The formulation of the Goddard Problem that we use is based on
parameters and constraints
from both \cite[p.~213]{Betts2010} and \cite[Example 3]{Rao:2010:ACM}:
\[
\begin{array}{llr}
\min & -h(T) \\[.05in]
\mbox{s.t.} & \dot{h}(t) = v(t) & h(0) = 0 \\[.05in]
& \dot{v}(t) = \frac{1}{m} \left[ u (t) - \sigma v^2 e^{-h(t)/h_0}
- g \right] &
v(0) = 0 \\[.05in]
& \dot{m}(t) = -u(t)/c & m(0) = 3 \\[.05in]
& m(t) \ge 1, \quad 0 \le u(t) \le u_{\max}, \quad t \in [0, T],
\end{array}
\]
where $u_{\max} = 193$, $g = 32.174$, $\sigma = 5.4915 \times 10^{-5}$,
$c = 1580.9425$, and $h_0 = 23800$.
In this problem, $h$ is the height of a rocket, $v$ is its velocity, $m$
is the mass, $c$ is the exhaust velocity of the propellant,
$g$ is the gravitational acceleration, $h_0$ is the
density scale height, $u$ is the thrust, and the terminal time is free.
The goal is to choose $T$ and the thrust $u$ so as to achieve the
greatest possible terminal height for the rocket.
The mass has a lower bound (the weight of the rocket minus the weight of
the propellant) which is taken to be one.

To solve the Goddard rocket problem with the Switch Point Algorithm, we
will convert the state constraint $m \ge 1$ into an additional cost
term in the objective.
Observe that the mass is a monotone decreasing function of time
since $u \ge 0$.
Since the goal is to achieve the great possible height, all the fuel will
be consumed and $m(T) = 1$.
Consequently, the state constraint is satisfied when the terminal constraint
$m(T) = 1$ is fulfilled.
By adding a term of the form $\beta (m(t) -1)$ to the objective, where
$\beta$ corresponds to the optimal costate evaluated at the terminal time,
the solution of the Goddard problem becomes a stationary point of the
problem with the modified objective and with the state constraint omitted.
To achieve an objective where the solution to the Goddard problem
is a local minimizer, we need to incorporate a penalty term in the objective:
\begin{equation}\label{obj}
-h(T) + \beta (m(T) - 1) + \frac{\rho}{2} (m(T) -1)^2 ,
\end{equation}
with $\rho > 0$ and $\beta \approx -2.31774080357308 \times 10^{4}$.
Our Goddard test problem corresponds to the original Goddard
problem, but with the state constraint dropped and with the objective
replaced by (\ref{obj}).

The optimal solution of the Goddard rocket problem has two switch points
and the optimal control is
\begin{equation}\label{switch}
u(t) = \left\{ \begin{array}{lll}
u_{\max} & 0 \le t \le s_1 & \approx 13.75532627577406, \\
u_{\mbox{\scriptsize sing}}(t) & s_1 < t \le s_2 & \approx 21.98890645593362, \\
0 & s_2 < t \le T & \approx 42.88910958027504,
\end{array} \right.
\end{equation}
where the control in the singular region, gotten from the second derivative
of the switching function, is
\[
u_{\mbox{\scriptsize sing}}(t) =
\sigma v^2(t) e^{h(t)/h_0} + mg + \frac{mg}{1 + 4 \kappa(t) + 2\kappa^2(t)}
\left[ \frac{c^2}{h_0 g} (1 + \kappa^{-1}(t)) - 1 - 2\kappa(t) \right],
\]
where $\kappa (t) = c/v(t)$.
The switching points in (\ref{switch}) were estimated
by integrating forward the dynamics utilizing the known structure for
the optimal solution.

We solve the Goddard test problem using $\rho = 10^{5}$, the starting guess
$s_1 = 13$, $s_2 = 21$, and $T = 42$, and the optimizer PASA with
MATLAB's integrator ODE45 and computing tolerances $10^{-8}$.
The solution time was 1.21~s, and the absolute errors in $s_1$, $s_2$,
and $T$ were $1.3 \times 10^{-8}$, $6.0 \times 10^{-8}$, and
$9.4 \times 10^{-8}$ respectively.
PASA achieved the convergence tolerance in 11 iterations, and essentially
all the computing time is associated with ODE45.


\section{Conclusions}
\label{conclusions}

A new approach, the Switch Point Algorithm, was introduced for solving
nonsmooth optimal control problems whose solutions are bang-bang,
singular, or both bang-bang and singular.
For problems where the optimal control has the form
$\m{u}(t) = \g{\phi}_j(\m{x}(t),t)$ (Case~1) or
$\m{u}(t) = \g{\phi}_j(\m{x}(t), \m{p}(t),t)$ (Case~2) for
$t \in (s_j, s_{j+1})$, $0 \le j \le k$, with
$\m{u}(t)$ feasible for $\m{s}$ in a neighborhood of the
optimal switching points and for the initial costate $\m{p}(0)$
in a neighborhood of the associated optimal costate,
the solution of the optimal control problem reduces to an optimization over
the switching points and the choice of the initial costate $\m{p}(0)$.
Formulas were obtained for the derivatives of the objective with respect
to the switch points, the initial costate $\m{p}(0)$, and the terminal time $T$.
All these derivatives can be computed simultaneously
with just one integration of the state or generalized state dynamics,
followed by one integration of the costate or generalized costate dynamics.
A series of test problems were solved by either optimizing
over the switching points (and over $\m{p}(0)$ in Case~2)
or by computing a point where the derivative of the objective with respect
a switch point vanishes.
Accurate solutions were obtained relatively quickly as seen in the comparisons
given in Tables~\ref{data1}--\ref{data3}.
\bibliographystyle{siam}

\end{document}